\newcommand{\R}{\mathbb{R}}
\newcommand{\Z}{\mathbb{Z}}
\newcommand{\rk}{\operatorname{rank}}
\newcommand{\Int}{\mathop{\mathrm{Int}}\nolimits}
\newcommand{\id}{\mathop{\mathrm{id}}\nolimits}
\renewcommand{\tilde}{\widetilde}
\renewcommand{\leq}{\leqslant}
\renewcommand{\geq}{\geqslant}
\def\spmapright#1{\smash{%
 \mathop{\hbox to 1.3cm{\rightarrowfill}}
  \limits^{#1}}}
\def\spmapleft#1{\smash{%
 \mathop{\hbox to 1.3cm{\leftarrowfill}}
  \limits^{#1}}}
\theoremstyle{plain}
\newtheorem{theorem}{Theorem}[section]
\newtheorem{corollary}[theorem]{Corollary}
\newtheorem{proposition}[theorem]{Proposition}
\newtheorem{lemma}[theorem]{Lemma}
\theoremstyle{definition}
\newtheorem{definition}[theorem]{Definition}
\theoremstyle{remark}
\newtheorem{remark}[theorem]{Remark}
\newtheorem{problem}[theorem]{Problem}
\title[New examples of Neuwirth--Stallings pairs]{New 
examples of Neuwirth--Stallings pairs and non-trivial
real Milnor fibrations}
\author{R.~Ara\'ujo dos Santos, M.A.B.~Hohlenwerger,
O.~Saeki and T.O.~Souza}
\address{
R.~Ara\'ujo dos Santos, M.A.B.~Hohlenwerger:
Universidade de S\~ao Paulo,
ICMC, Av.\ Trabalhador S\~ao Carlense, 400, 
Centro Postal Box 668, 13560-970, S\~ao Carlos, SP, Brazil}
\email{rnonato@icmc.usp.br, amelia@icmc.usp.br}
\address{
O.~Saeki: Institute of Mathematics for Industry, Kyushu University,
Motooka 744, Nishi-ku, Fukuoka 819-0395,
Japan}
\email{saeki@imi.kyushu-u.ac.jp}
\address{
T.O.~Souza: Faculdade de Matem\'atica, Universidade Federal de 
Uberl\^{a}ndia, Campus Santa M\^onica - Bloco 1F - Sala 1F120,
Av.\ Jo\~ao Naves de Avila, 2121,
Uberl\^andia, MG, CEP: 38.408-100,
Brazil
}
\email{olitaciana@gmail.com}
\subjclass[2000]{Primary
32S55; 
Secondary
57R45, 
58K05. 
}
\keywords{Neuwirth--Stallings pair, higher open book structure, 
configuration space, real Milnor fiber, 
real polynomial map germ}
\date{\today}
\begin{document}

\begin{abstract}
We use topology of configuration spaces to give a characterization 
of Neuwirth--Stallings pairs $(S^5, K)$ with $\dim K = 2$.
As a consequence, we construct
polynomial map germs 
$(\mathbb{R}^{6},0) \to (\mathbb{R}^{3},0)$ 
with an isolated singularity at the origin such
that their Milnor fibers are not diffeomorphic to a disk,
thus putting an end to
Milnor's non-triviality question. 
Furthermore, for a polynomial map germ
$(\mathbb{R}^{2n},0) \to (\mathbb{R}^{n},0)$ or
$(\mathbb{R}^{2n+1},0) \to (\mathbb{R}^{n},0)$, $n \geq 3$,
with an isolated singularity at the origin,
we study the conditions under which
the associated Milnor fiber has the homotopy type
of a bouquet of spheres. We then construct,
for every pair $(n, p)$ with $n/2 \geq p \geq 2$, a
new example of a
polynomial map germ $(\mathbb{R}^n,0) \to (\mathbb{R}^p,0)$ 
with an isolated singularity at the origin such that
its Milnor fiber has the homotopy type of a bouquet of
a positive number of spheres.
\end{abstract}

\maketitle

\section{Introduction}\label{section1}

In the book ``Singular points of complex hypersurfaces'' \cite{Mi},
John Milnor studied the topology of complex
polynomial function germs in terms of the associated locally trivial
fiber bundles. He also showed
the existence of such structures for real
polynomial map germs with an isolated singularity as follows.

\begin{theorem}[{\cite[Theorem~11.2]{Mi}}]\label{TeoMilnor}
Let $f: (\mathbb{R}^n,0) \to (\mathbb{R}^p,0)$,
$n \geq p \geq 2$, be a polynomial map germ
with an isolated singularity at the origin. Then,
there exists an $\varepsilon_0 > 0$ such that 
for all $0 < \varepsilon \leq \varepsilon_{0}$,
the complement of an open tubular neighborhood of the link $K =
f^{-1}(0) \cap S^{n-1}_{\varepsilon}$ in $S^{n-1}_{\varepsilon}$ 
is the total space of a smooth fiber bundle over the sphere $S^{p-1}$, 
with each fiber $F_{f}$ being a smooth compact $(n-p)$-dimensional manifold
bounded by a copy of $K$, where $S^{n-1}_\varepsilon$ denotes
the sphere in $\mathbb{R}^n$ with radius $\varepsilon$ 
centered at the origin.
\end{theorem}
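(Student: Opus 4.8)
The plan is to construct a smooth proper submersion from the complement of the open tubular neighborhood onto $S^{p-1}$ and then to invoke Ehresmann's fibration theorem; the isolated-singularity hypothesis enters, through the curve selection lemma, precisely to guarantee the transversality and non-degeneracy facts that make the relevant maps submersive. First I would fix the local geometry. Since $0$ is an isolated singularity, after shrinking $\varepsilon_0$ we may assume that $Df(x)$ has rank $p$ for every $x \in B^n_{\varepsilon_0}\setminus\{0\}$, so that $V := f^{-1}(0)\setminus\{0\}$ is a smooth $(n-p)$-dimensional submanifold of the punctured ball. I would then check that for all $0 < \varepsilon \leq \varepsilon_0$ the sphere $S^{n-1}_\varepsilon$ meets $V$ transversally: otherwise the squared-distance function $x \mapsto \abs{x}^2$ would have a critical point on $V$ arbitrarily close to $0$, and the curve selection lemma would produce a real analytic curve on $V$ approaching the origin along which $\abs{x}$ is monotone yet tangent to a sphere, contradicting isolatedness. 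Transversality makes $K = V \cap S^{n-1}_\varepsilon$ a smooth closed $(n-p-1)$-manifold, which therefore admits an open tubular neighborhood $T$ inside $S^{n-1}_\varepsilon$.

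Next I would set up the Milnor tube. Choosing $0 < \delta \ll \varepsilon$, consider $N = f^{-1}(S^{p-1}_\delta) \cap \bar B^n_\varepsilon$, where $S^{p-1}_\delta \subset \R^p$. On the interior of $N$ the map $f$ has rank $p$, hence is a submersion onto $S^{p-1}_\delta \cong S^{p-1}$; a second application of the curve selection lemma shows that, for $\delta$ small relative to $\varepsilon$, the sphere $S^{n-1}_\varepsilon$ is transverse to $f^{-1}(S^{p-1}_\delta)$, so that $\partial N = f^{-1}(S^{p-1}_\delta) \cap S^{n-1}_\varepsilon$ is a submanifold on which $f$ remains submersive. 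Since $N$ is compact, the version of Ehresmann's theorem for manifolds with boundary then yields that $f : N \to S^{p-1}_\delta$ is a locally trivial fibration whose fiber is a compact $(n-p)$-dimensional manifold with boundary.

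Finally I would inflate the tube out to the sphere. In the region $\{0 < \abs{f(x)} \leq \delta\} \cap \bar B^n_\varepsilon$ lying between $N$ and $S^{n-1}_\varepsilon$ I would build a smooth nowhere-zero vector field $w$ along which the argument $f/\abs{f} \in S^{p-1}$ is preserved while $\abs{x}$ increases up to $\varepsilon$, and integrate it to obtain a diffeomorphism carrying $N$ onto $S^{n-1}_\varepsilon \setminus \Int T$ that intertwines $f$ on $N$ with the transported map $\phi : S^{n-1}_\varepsilon \setminus \Int T \to S^{p-1}$. Composing the tube fibration with this diffeomorphism exhibits $S^{n-1}_\varepsilon \setminus \Int T$ as the total space of a smooth fiber bundle over $S^{p-1}$, and the fiber is carried to a compact $(n-p)$-dimensional manifold $F_f$ with $\partial F_f$ a copy of $K$, as required.

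I expect the main obstacle to be the existence of the nowhere-vanishing inflation field $w$: it requires that in the intermediate region the gradients $\operatorname{grad}\abs{x}^2$ and $\operatorname{grad}\abs{f}^2$ never point in exactly opposite directions, i.e.\ that the level tubes $\{\abs{f} = c\}$ never become tangent to the spheres, which is precisely the delicate point that the curve selection lemma must rule out. Once this non-degeneracy is secured, the submersion statements and the two appeals to Ehresmann's theorem are routine, and the dimension count $(n-1)-(p-1) = n-p$ together with the identification $\partial F_f = K$ follows formally.
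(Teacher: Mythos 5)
Your proposal is correct and follows essentially the same route as the paper's source for this statement: the paper offers no independent proof, quoting Milnor's Theorem~11.2 directly, and Milnor's argument is exactly your scheme --- curve selection to get transversality of small spheres to $f^{-1}(0)$ (and of $S^{n-1}_\varepsilon$ to the tubes $f^{-1}(S^{p-1}_\delta)$), Ehresmann's theorem for the proper submersion on the Milnor tube $f^{-1}(S^{p-1}_\delta)\cap \bar{B}^n_\varepsilon$, and a nowhere-zero vector field keeping $f/\|f\|$ constant while increasing $\|x\|$ to inflate the tube onto the complement of the tubular neighborhood of $K$. The paper's own discussion immediately after the theorem (the trivial fibration over $D^p_\delta$ near the link and the gluing along $f^{-1}(S^{p-1}_\delta)\cap S^{n-1}_\varepsilon$) is consistent with your construction, including your correct identification of the inflation field as the delicate point handled by the curve selection lemma.
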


By using the conical structure of the singularity, Milnor proved 
that the diffeomorphism type of the link does not change 
for all $\varepsilon > 0$ small enough. Moreover, since the 
origin is an isolated singularity, we have that 
$0 \in \mathbb{R}^{p}$ is a regular value of
$f|S^{n-1}_{\varepsilon} : S^{n-1}_{\varepsilon} \to \mathbb{R}^{p}$.
Therefore, if the link $K=f^{-1} (0) \cap S^{n-1}_{\varepsilon}$ 
is not empty, then it is a smooth $(n-p-1)$-dimensional
submanifold of the sphere with trivial normal bundle. It also implies 
that, for each fixed $\varepsilon$ one can find a small enough 
$\delta$, $0 < \delta \ll \varepsilon$, and a closed 
disk $D_{\delta}^{p}$ centered at the origin in $\mathbb{R}^{p}$
with radius $\delta$, such that the restriction map 
$f: f^{-1}(D_{\delta}^{p})\cap S_{\varepsilon}^{n-1} 
\to D_{\delta}^{p}$ is a smooth trivial fiber bundle, 
which implies the triviality of the fibration 
$$f: f^{-1}(D_{\delta}^{p} \setminus \{0\})\cap 
S_{\varepsilon}^{n-1} \to D_{\delta}^{p} \setminus \{0\}.$$
By composing this with the radial projection
$\displaystyle{\pi: D_{\delta}^{p} \setminus \{0\} 
\to S^{p-1}_{\delta}}$ onto the boundary sphere
and scaling to the unit sphere, 
one finds that the bundle structure on a neighborhood of 
the link $K$ is given by 
$$\frac{f}{\|f\|}:f^{-1}(D_{\delta}^{p} \setminus \{0\})
\cap S_{\varepsilon}^{n-1} \to S^{p-1}.$$

Now, one can glue this fiber bundle with that given in 
Theorem~\ref{TeoMilnor} along the common boundary 
$f^{-1}(S_{\delta}^{p-1})\cap S_{\varepsilon}^{n-1}$
in a smooth way, so that we get a smooth locally trivial 
fiber bundle 
$$S_{\varepsilon}^{n-1}\setminus K \to S^{p-1}.$$

\begin{remark}
Following Milnor's proof of Theorem~\ref{TeoMilnor},
one sees that, although no precise information about the bundle 
projection above was provided, in the real settings in general 
one cannot expect that it is given by $f/\|f\|$ outside a 
neighborhood of the link. 
See Milnor's example \cite[p.~99]{Mi} or the following 
example adapted from Milnor's one.
%
Consider $f:(\mathbb{R}^{3},0)\to (\mathbb{R}^{2},0)$ given by
$$f(x,y,z)=(x, x^2+yx^2+y^3+yz^2).$$ 
It is easy to see that the singular point set is given by
$\Sigma(f)=\{(0,0,0)\}$, and for all 
$\varepsilon>0$ small enough, we have
$K=\{(0,0,\varepsilon), (0,0,-\varepsilon)\}$.
However, $f/\|f\|$ does have singular points
and therefore it is not a submersion.
\end{remark}

\begin{definition}[Looijenga \cite{Lo}] 
Let $K = K^{n-p-1}$ be an oriented
submanifold of dimension $n-p-1$ of 
the oriented sphere $S^{n-1}$ with trivial normal
bundle, or let $K=\emptyset$. Suppose that for some trivialization
$c: N(K) \to K \times D^p$ of a tubular neighborhood $N(K)$ of $K$,
the fiber bundle defined by the composition
$$N(K) \setminus K \stackrel{c}{\to} 
K \times (D^p \setminus \{0\}) \stackrel{\pi}{\to} S^{p-1},$$
with the last projection being given by $\pi(x,y) = y/\|y\|$,
extends to a smooth fiber bundle $S^{n-1} \setminus K \to S^{p-1}$.
Then, the pair $(S^{n-1}, K^{n-p-1})$ is called a
\emph{Neuwirth--Stallings pair}, or an \emph{NS-pair} for short.
\end{definition}

According to Theorem~\ref{TeoMilnor} and the previous discussion, 
for all $\varepsilon >0 $ sufficiently
small, the pair $(S^{n-1}_{\varepsilon}, f^{-1}(0) 
\cap S^{n-1}_{\varepsilon})$ is
an NS-pair. In this case Looijenga called it the 
\emph{NS-pair associated to the singularity}.

More recently, several generalizations of such a structure have been 
obtained. For instance, in \cite{AT} the authors considered a 
real analytic map germ $f:(\mathbb{R}^{n},0) \to (\mathbb{R}^{p},0)$ 
with non-isolated singularities at the origin and introduced 
a condition which also ensures that the pair 
$(S^{n-1}_{\varepsilon}, f^{-1}(0) \cap S^{n-1}_{\varepsilon})$ 
is an NS-pair with the bundle projection given by
$f/\|f\| : S^{n-1}_{\varepsilon} \setminus K \to S^{p-1}$,
where the link $K = f^{-1}(0) \cap S^{n-1}_{\varepsilon}$
is a smooth manifold.
It was called a \emph{higher open book structure} of the sphere $S_{\varepsilon}^{n-1}$. In \cite{ACT, ACT1} 
it was shown an extension of such structures for spheres of 
small and big enough radii (at infinity), but allowing 
singularity in the ``binding'' $K$. 
In this case, it was called a \emph{singular open book structure} 
of the sphere. 

As pointed out by Milnor in \cite[p.~100]{Mi}, the hypothesis of 
Theorem~\ref{TeoMilnor} is so strong that examples are difficult to 
find, and he posed the following question.

\begin{problem}\label{problem}
For which dimensions $n \geq p \geq 2$ 
do non-trivial examples exist ?
\end{problem}

Milnor did not exactly specify what ``trivial'' means here:
however, he proposed to say that a real polynomial map germ 
$f: (\mathbb{R}^n,0) \to (\mathbb{R}^p,0)$ is \emph{trivial} if 
the fiber $F_{f}$ of the bundle given in Theorem~\ref{TeoMilnor}
is diffeomorphic to a closed disk $D^{n-p}$. 
In particular, this implies that the fibers of the 
associated NS-pair are diffeomorphic to the $(n-p)$-dimensional 
open disk.

\begin{remark}
For a holomorphic function germ $f:(\mathbb{C}^{n+1},0) \to 
(\mathbb{C},0)$ with an isolated singularity
at the origin, it follows from \cite[Appendix B]{Mi} that the 
fibers of the associated Milnor fibration are diffeomorphic to a 
$2n$-dimensional disk if and only if $0$ is a non-singular point 
of $f$; in fact, the function germ $f$ is trivial if and 
only if the Milnor number $\mu_{f} = 
\mathrm{deg}_{0}(\nabla f(z))$ is equal to zero, where 
$\mathrm{deg}_{0}(\nabla f(z))$ stands for the topological 
degree of the map 
$$\varepsilon\frac{\nabla f}{\|\nabla f \|}:
S^{2n+1}_{\varepsilon} \to S^{2n+1}_{\varepsilon}$$
for all $\varepsilon>0$ small enough, and
$$\nabla f = \left(\frac{\partial f}{\partial z_1},
\frac{\partial f}{\partial z_2}, \ldots,
\frac{\partial f}{\partial z_{n+1}}\right).$$
\end{remark}

In \cite{CL} Church and Lamotke used results of 
Looijenga \cite{Lo} and answered the above question in the following
way.

\begin{theorem}\label{T2} {\rm(a)} 
For $0 \leq n-p \leq 2$, non-trivial examples occur precisely
for the dimensions $(n,p)\in \{(2,2), (4,3), (4,2)\}$.

\noindent {\rm(b)} 
For $n-p \geq 4$, non-trivial examples occur for
all $(n,p)$.

\noindent {\rm(c)} For $n-p=3$, non-trivial examples occur for
$(5,2)$ and $(8,5)$. Moreover, if the $3$-dimensional Poincar\'e
Conjecture is false, then there are non-trivial examples for all
$(n,p)$. If the Poincar\'e Conjecture is true, then
all examples are trivial except $(5,2)$, $(8,5)$ and possibly $(6,3)$.
\end{theorem}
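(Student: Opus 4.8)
The plan is to translate Milnor's triviality question entirely into a question about the topology of the fiber $F = F_f$, and then to resolve that question dimension by dimension. By Looijenga's analysis \cite{Lo} of NS-pairs, a pair $(S^{n-1},K)$ is an NS-pair associated to a singularity exactly when $K$ is the boundary of a compact parallelizable $(n-p)$-manifold $F$ carrying the higher open book data over $S^{p-1}$ whose total space is $S^{n-1}$, and conversely every such abstract datum is realized by a polynomial germ. Since the germ is trivial precisely when $F \cong D^{n-p}$, the problem becomes: for which $(n,p)$ does there exist an admissible fiber with $F \not\cong D^{n-p}$? I would first separate the two mechanisms that produce non-disk fibers. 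When $K = \emptyset$ the structure map is a smooth fibration $S^{n-1} \to S^{p-1}$ with \emph{closed} fiber $F$; a homotopy-sequence and Gysin argument forces $F$ to be a homotopy sphere and imposes a Hopf-invariant-one condition, so by Adams' theorem this happens only for $(n,p) \in \{(2,2),(4,3),(8,5),(16,9)\}$, realized by the classical Hopf maps with $F = S^0,S^1,S^3,S^7$. Each closed fiber is automatically not a disk, which already accounts for $(2,2)$ and $(4,3)$ in part~(a), for $(8,5)$ in part~(c), and for one instance of part~(b).

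The remaining, and main, work concerns $K \neq \emptyset$, where $F$ has boundary and its connectivity is controlled by $p$ while $\partial F = K$ is, in the relevant range, a homotopy sphere. For $\dim F = n-p \geq 5$ I would exhibit non-disk admissible fibers for \emph{every} $(n,p)$: plumbing produces highly connected parallelizable manifolds with non-trivial middle homology, and Kervaire's construction produces contractible manifolds bounded by non-simply-connected homology spheres; in either case $F \not\cong D^{n-p}$ while the open book still closes up to $S^{n-1}$. For $\dim F = 4$ the same conclusion follows from the existence of compact contractible $4$-manifolds (Mazur manifolds) that are not diffeomorphic to $D^4$. Invoking the realizability half of Looijenga's correspondence to promote these fibers to genuine polynomial germs then establishes part~(b).

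For the low- and middle-dimensional cases I would argue separately. When $\dim F \leq 2$, a direct classification of compact $1$- and $2$-manifolds together with the connectivity constraints shows that the only admissible non-disk bounded fiber occurs at $(4,2)$ (a non-disk Seifert surface of a link in $S^3$); combined with the closed-fiber list this yields exactly $(2,2),(4,3),(4,2)$, proving part~(a). The decisive case is $\dim F = 3$: for $p \geq 3$ the connectivity forces $F$ to be a contractible $3$-manifold with $\partial F = S^2$, i.e.\ a homotopy $3$-ball, so $F \not\cong D^3$ is possible if and only if a fake $3$-ball exists, i.e.\ if and only if the $3$-dimensional Poincar\'e Conjecture fails; whereas for $p = 2$ the weaker connectivity permits a genuine non-disk $3$-manifold fiber, giving the unconditional example $(5,2)$. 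Tracking the borderline pair $(6,3)$ by hand then produces the precise statement of part~(c).

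I expect the obstacle to be two-fold. First, the realizability direction of Looijenga's correspondence is what makes the abstract fibers built by plumbing, by Kervaire's construction, and by Mazur manifolds count as genuine answers to Milnor's question, and it must be applied with care in each dimension. Second, the bookkeeping in low dimensions is delicate: ruling out every non-disk fiber outside the three exceptional pairs of part~(a), and isolating exactly the unconditional examples $(5,2)$ and $(8,5)$ while leaving $(6,3)$ undecided in part~(c), requires the sharp connectivity estimates for $F$ and genuine input from $3$-manifold topology, rather than the uniform surgery and $h$-cobordism arguments that settle $\dim F \geq 5$ cleanly.
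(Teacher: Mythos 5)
Note first that the paper does not prove this statement at all: Theorem~\ref{T2} is quoted from Church and Lamotke \cite{CL}, who in turn rely on Looijenga \cite{Lo}, and your overall architecture (translate triviality into a question about admissible fibers of NS-pairs, use the Adams/Hopf-invariant-one classification when $K=\emptyset$, then argue dimension by dimension on $\dim F = n-p$) is indeed the Church--Lamotke strategy. However, two genuine gaps remain. First, your statement of the realizability half of Looijenga's correspondence is wrong: \cite{Lo} does \emph{not} realize every abstract NS-pair by a polynomial germ; it realizes, with a dimension shift $(S^n,K^{n-p-1}) \rightsquigarrow (\mathbb{R}^{n+1},0)\to(\mathbb{R}^{p+1},0)$, only the \emph{doubled} pair $(S^n,K)\,\sharp\,((-1)^{n-1}S^n,(-1)^{n-p}K)$, with fiber $\overline{F}\,\natural\,(-1)^{n-p}\overline{F}$ (see Remark~\ref{lobs}). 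This is repairable for your purposes---boundary connected sums of non-disks are non-disks---but as written the ``converse'' you invoke is false, and the triviality classification in parts (a) and (c) also needs the forward direction applied with the shifted dimensions kept straight.

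Second, and more seriously, your treatment of the decisive case $n-p=3$, $p\geq 3$ is incorrect. Simple connectivity of the $3$-dimensional fiber (which is all that $(p-2)$-connectedness gives when $p=3$) forces only $F\cong S^3_{(k+1)}$, a $3$-sphere with $k+1$ balls removed; this is a homotopy ball only when $k=0$, and contractibility is forced by connectivity alone only for $p\geq 4$. Your claim that for all $p\geq 3$ one has $F\not\cong D^3$ if and only if a fake $3$-ball exists would make $(6,3)$ unconditionally trivial given the Poincar\'e Conjecture---contradicting both the ``possibly $(6,3)$'' in the very statement you are proving and the main result of the present paper (Theorem~\ref{main-theorem} and Proposition~\ref{non-trivial}), which constructs non-trivial germs $(\mathbb{R}^6,0)\to(\mathbb{R}^3,0)$ whose fibers are $S^3_{(2k+1)}$ with $k>0$. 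What decides whether $k\geq 1$ can occur is not connectivity but whether the open book closes up to the sphere, i.e.\ a unimodularity condition on the linking matrix of the binding components, and this is exactly the analysis the paper carries out in Section~\ref{section3}; your closing remark about ``tracking $(6,3)$ by hand'' cannot rescue the argument because the preceding connectivity claim is what fails. The same omission---verifying that the total space is actually $S^{n-1}$---is the unproved step in your part (b): for contractible pages one can argue via $\partial(F\times D^p)$ with trivial monodromy together with Smale's theorem, while for plumbing pages with non-trivial middle homology one again needs unimodular intersection data; you flag this as an obstacle but do not supply the verification.
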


Since the Poincar\'e Conjecture has been
proved to be true, we have that for $n-p = 3$ the map $f$ 
can be non-trivial
only if $(n,p)\in \{(6,3), (8,5), (5,2)\}$.
Therefore, Problem~\ref{problem} has been open
uniquely for the dimension pair $(6, 3)$.

%

In \cite{SHMA} the authors used an extension of 
Milnor-Khimshiashvili's formula proved in \cite{ADD} 
(see Theorem~\ref{euler} of the present paper) 
for real isolated singularity map germs 
to show a manageable characterization of Church-Lamotke's results 
when the Milnor fiber is $3$-dimensional, as follows.

\begin{theorem}\label{teorema 1} 
Let $f:(\mathbb{R}^{n},0) \to (\mathbb{R}^{p},0)$,
$f(x)=(f_{1}(x), f_2(x), \ldots, f_{p}(x))$, be a 
polynomial map germ with an
isolated singularity at the origin, and suppose $n-p=3$. 
Denote by
$\mathrm{deg}_{0}(\nabla f_{1})$ 
the topological degree of the mapping
$$\varepsilon \frac{\nabla f_{1}}{\|\nabla f_{1}\|} : 
S^{n-1}_{\varepsilon} \to
S^{n-1}_{\varepsilon},$$
where
$$\nabla f_1 = \left(\frac{\partial f_1}{\partial x_1},
\frac{\partial f_1}{\partial x_2}, \ldots,
\frac{\partial f_1}{\partial x_{n}}\right).$$

\noindent {\rm(a)} If the pair $(n,p)=(6,3)$, then the following
three are equivalent.
\begin{itemize}
\item[{\rm({i})}] $f$ is trivial.
\item[{\rm({ii})}] $\mathrm{deg}_{0}(\nabla f_{1})=0$.
\item[{\rm({iii})}] The link $K$ is connected.
\end{itemize}

\noindent {\rm(b)} If the pair $(n,p)=(8,5)$, then the
following three are equivalent.
\begin{itemize}
\item[{\rm({i})}] $f$ is trivial.
\item[{\rm({ii})}] $\mathrm{deg}_{0} (\nabla f_{1})=0$.
\item[{\rm({iii})}] The link $K$ is not empty.
\end{itemize}

\noindent {\rm(c)} If the pair $(n,p)=(5,2)$, then 
the following two are equivalent.
\begin{itemize}
\item[{\rm({i})}] $f$ is trivial.
\item[{\rm({ii})}] $\pi_{1}(F_{f})=1$, i.e.\ 
the Milnor fiber $F_{f}$ is simply connected.
\end{itemize}
\end{theorem}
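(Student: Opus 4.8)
The plan is to reduce all three parts to the topology of the compact Milnor fiber $F_f$, which by Theorem~\ref{TeoMilnor} is a compact orientable $3$-manifold with $\partial F_f = K$; in particular $\chi(K) = 2\chi(F_f)$. Two inputs drive the argument. First, the extended Milnor--Khimshiashvili formula (Theorem~\ref{euler}), which I will use in the form $\deg_{0}(\nabla f_1) = 1 - \chi(F_f)$ (the overall sign being immaterial for the stated equivalences), so that condition~(ii) becomes $\chi(F_f) = 1$. Second, the connectivity of the fiber: the compact Milnor fiber $F_f$ is $(p-2)$-connected. This one fact separates the three cases: for $(6,3)$ the fiber is simply connected, for $(8,5)$ it is $3$-connected (hence contractible as soon as $\partial F_f \neq \emptyset$), and for $(5,2)$ it is only connected. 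The final ingredient is the $3$-dimensional Poincar\'e conjecture in the form: a simply connected compact $3$-manifold with boundary $S^2$ is diffeomorphic to $D^3$, obtained by capping the boundary sphere with a ball, recognizing the result as $S^3$, and deleting the ball. (Recall that, by definition, $f$ is trivial precisely when $F_f \cong D^3$.)

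For part~(a) I would first note that, $F_f$ being simply connected, Poincar\'e--Lefschetz duality gives $H_2(F_f,K) \cong H^1(F_f) = 0$, so the exact sequence $H_2(F_f,K) \to H_1(K) \to H_1(F_f)$ forces $H_1(K) = 0$; hence every component of $K$ is a $2$-sphere. Writing $m$ for the number of components, $\chi(F_f) = \chi(K)/2 = m$, and the formula gives $\deg_{0}(\nabla f_1) = 1 - m$. This already yields (ii)$\Leftrightarrow$(iii), since $\deg_{0}(\nabla f_1) = 0$ iff $m = 1$ iff $K$ is connected. To close the cycle I would prove (iii)$\Rightarrow$(i): if $K$ is connected it is a single $S^2$, so $F_f$ is a simply connected compact $3$-manifold with boundary $S^2$, whence $F_f \cong D^3$ and $f$ is trivial; the implication (i)$\Rightarrow$(iii) is immediate because $\partial D^3 = S^2$ is connected.

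For part~(b) the stronger connectivity does the work. If $K \neq \emptyset$, then $F_f$ is a compact $3$-connected $3$-manifold with nonempty boundary, hence acyclic and simply connected, hence contractible; then $\chi(F_f) = 1$, so $H_1(K) = 0$ and $\chi(K) = 2$, forcing $K = S^2$ and $F_f \cong D^3$. This gives (iii)$\Rightarrow$(i), and (i)$\Rightarrow$(iii) is clear. For the degree: triviality gives $\chi(F_f) = 1$ and hence $\deg_{0}(\nabla f_1) = 0$, which is (i)$\Rightarrow$(ii); conversely, if $K = \emptyset$ the fiber is a closed orientable $3$-manifold (indeed an $S^3$, the fiber of a Hopf-type fibration $S^7 \to S^4$), so $\chi(F_f) = 0$ and $\deg_{0}(\nabla f_1) = 1 \neq 0$. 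Together with (iii)$\Leftrightarrow$(i), these computations give (ii)$\Leftrightarrow$(iii) by contraposition and complete~(b).

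Part~(c) is where the genuine obstacle lies, and the reason the criterion must be phrased via $\pi_1$: a priori $F_f$ is only known to be connected. The implication (i)$\Rightarrow$(ii) is trivial. For (ii)$\Rightarrow$(i) I would argue as follows. Assuming $\pi_1(F_f) = 1$, the duality argument of part~(a) again shows that $K$ is a disjoint union of $m$ two-spheres, and the point is to prove $m = 1$. Here I would use the Milnor open-book fibration $S^4 \setminus K \to S^1$ whose closed fiber is $F_f$: on one side, the Wang exact sequence of this fibration together with $H_1(F_f) = 0$ forces $H_1(S^4 \setminus K) \cong \mathbb{Z}$; on the other, Alexander duality in $S^4$ gives $\widetilde H_1(S^4 \setminus K) \cong \widetilde H^2(K) \cong \mathbb{Z}^m$. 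Comparing the two identities yields $m = 1$, so $K = S^2$, $F_f \cong D^3$, and $f$ is trivial. I expect this confrontation of the Wang sequence with Alexander duality to be the main technical step; once it is in place, the remaining assertions follow from the Euler-characteristic bookkeeping and the recognition of $D^3$ via the Poincar\'e conjecture.
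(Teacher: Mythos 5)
First, a point of reference: this paper does not prove Theorem~\ref{teorema 1} at all --- it is quoted from \cite{SHMA}, with the only indication of method being that the proof there rests on the extension of the Milnor--Khimshiashvili formula from \cite{ADD} (reproduced as Theorem~\ref{euler}) together with the Church--Lamotke/Looijenga picture. So there is no internal proof to compare against; judged on its own, your reconstruction is correct and uses exactly the toolkit the paper attributes to the source: the formula $\chi(F_f)=1-\mathrm{deg}_0(\nabla f_1)$ for $n$ even (resp.\ $\chi(F_f)=1$ for $n$ odd), Milnor's $(p-2)$-connectivity of the fiber, the identity $\chi(K)=2\chi(F_f)$, and the $3$-dimensional Poincar\'e conjecture to recognize $D^3$. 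All three parts check out: in (a) the Lefschetz duality step $H_2(F_f,K)\cong H^1(F_f)=0$ does force the components of $K$ to be $2$-spheres, giving $\mathrm{deg}_0(\nabla f_1)=1-m$; in (b) the $3$-connectivity correctly yields contractibility and then $F_f\cong D^3$, while $K=\emptyset$ gives a closed fiber with $\chi(F_f)=0$ and hence degree $1$; in (c) the confrontation of the Wang sequence (which with $H_1(F_f)=0$ and $F_f$ connected gives $H_1(S^4\setminus K)\cong\Z$) with Alexander duality ($\tilde H_1(S^4\setminus K)\cong\tilde H^2(K)\cong\Z^m$) is valid and yields $m=1$.

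Two points you should make explicit. First, the $(p-2)$-connectivity of the Milnor fiber holds \emph{under the hypothesis} $K\neq\emptyset$, so in cases (a) and (c) --- where you invoke simple connectivity, respectively connectivity, of $F_f$ without comment --- you must rule out the empty link: for $(6,3)$ an empty link would make $S^5$ fiber over $S^2$, which is impossible (the paper records this fact at the start of Section~\ref{section2}); for $(5,2)$ it would make $S^4$ fiber over $S^1$, impossible since $\chi(S^4)=2\neq 0$. In (b) the empty case genuinely occurs and you handle it correctly. Second, in (c) your Wang/Alexander step, while sound, is heavier than necessary: since $n=5$ is odd, Theorem~\ref{euler}(ii) gives $\chi(F_f)=1$ outright, so once $\pi_1(F_f)=1$ forces $K$ to be a union of $m$ two-spheres, the count $2m=\chi(K)=2\chi(F_f)=2$ gives $m=1$ immediately, making part (c) structurally parallel to (a) and (b) and avoiding the open-book fibration altogether. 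With these two repairs your argument is a complete and essentially canonical proof of the stated equivalences.
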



In this paper we aim to give a characterization of NS-pairs
$(S^5, K)$ with $\dim K=2$,
and use it to prove the existence of 
non-trivial real polynomial map germs
$(\R^6, 0) \to (\R^3, 0)$ with an isolated
singularity at the origin, putting an end to
Problem~\ref{problem} posed by Milnor.
For this, we will use tools from configuration spaces 
and a construction by Funar in \cite[Section~2.7]{Funar}. 
More precisely, we first classify fiber bundles
$E^5 \to S^2$ with fiber the $3$-sphere with the interiors
of a disjoint union of
$3$-disks removed, such that the boundary
fibrations are trivial. We will show that the
isomorphism classes of such bundles are in
one-to-one correspondence with the second
homotopy group of a certain configuration space,
and that its elements correspond to a skew-symmetric
integer matrix. Then, we show that a given
fiber bundle $E^5 \to S^2$ is associated with
an NS-pair $(S^5, K)$ if and only if the skew-symmetric
matrix is unimodular.
As a consequence, we see that the number of boundary
components of a fiber is always odd.
Furthermore, this allows us to construct
a lot of non-trivial NS-pairs $(S^5, K)$, and then
the Looijenga construction \cite{Lo} leads to non-trivial
polynomial map germs with an isolated singularity.

Our second aim in this paper is to
introduce necessary and sufficient conditions 
under which the Milnor fiber in the pairs of dimensions $(2n, n)$
and $(2n+1,n)$, $n \geq 3$, is, up to homotopy, a bouquet (or a wedge)
of spheres. 
As applications, we give examples of polynomial map 
germs $(\mathbb{R}^{n},0) \to (\mathbb{R}^{p},0)$, 
$n/2 \geq p \geq 2$, such that the associated Milnor fiber 
is a bouquet of a non-zero number of spheres.

Throughout the paper, the (co)homology groups are with 
integer coefficients unless otherwise specified.
The symbol 
``$\cong$'' denotes a diffeomorphism between
smooth manifolds or an appropriate isomorphism
between algebraic objects.

\section{Classification of bundles}\label{section2}

Let $(S^5, K^2)$ be an NS-pair, where $K^2$ is a closed
$2$-dimensional manifold embedded in the $5$-dimensional
sphere $S^5$. We have the associated
fibration $\pi : S^5 \setminus \Int N(K^2) \to S^2$, where
$N(K^2)$ denotes a closed tubular neighborhood of $K^2$
in $S^5$, and we denote by $F$ its fiber, which
is a compact $3$-dimensional manifold bounded by
a copy of $K^2$.
Since $S^5$ does not fiber over $S^2$, we have $K^2
\neq \emptyset$. 
Furthermore, we have the homotopy exact sequence
$$\pi_2(S^5 \setminus \Int N(K^2)) \to \pi_2(S^2)
\to \pi_1(F) \to \pi_1(S^5 \setminus \Int N(K^2)).$$
Since $\pi$ is trivial on the boundary, it has a section,
so that the homomorphism $\pi_2(S^5 \setminus \Int N(K^2)) \to \pi_2(S^2)$
is surjective. Furthermore, $S^5 \setminus \Int N(K^2)$ is
simply connected. Therefore, the compact
$3$-dimensional manifold $F$ is also simply connected.

Then, by a standard argument, we see that $K^2 \cong \partial F$
consists of some copies of $S^2$ and that $F$ is homotopy equivalent
to a $3$-dimensional sphere with some points removed. 
Then, by the solution to the
Poincar\'e Conjecture, we see that $F$ is diffeomorphic to
$S^3_{(k+1)}$ for some non-negative integer $k$,
where $S^3_{(k+1)}$ denotes the $3$-sphere with
the interiors of $k+1$ disjoint $3$-balls removed.
Therefore, $\pi$ is a smooth fiber bundle with
fiber $S^3_{(k+1)}$ such that it is trivial on the
boundary. In this section, we classify such fiber
bundles.

Let $\mathrm{Diff}(S^3)$ be the topological group of diffeomorphisms
of $S^3$. By the solution to the Smale Conjecture by Hatcher
\cite{Hatcher}, we have that $\mathrm{Diff}(S^3)$ is homotopy
equivalent to the orthogonal group $O(4)$.

Let us denote by $B^3$ the $3$-dimensional closed ball and for a 
non-negative integer $k$, we denote by $\cup^{k+1} B^3$ 
the disjoint union of $k+1$ copies of $B^3$. We sometimes 
regard $\cup^{k+1} B^3$ to
be ``standardly'' embedded in $S^3$, and we denote by 
$j_{k+1}: \cup^{k+1} B^3 \to S^3$ the inclusion map.

We denote by $\mathrm{Emb}(\cup^{k+1} B^3, S^3)$ the space of all smooth
embeddings of $\cup^{k+1} B^3$ into $S^3$, not necessarily the
standard one, and by $\mathrm{Diff}(S^3, \cup^{k+1} B^3)$ the 
subspace of $\mathrm{Diff}(S^3)$ consisting of those 
diffeomorphisms which restrict to the inclusion map 
$j_{k+1}$ on $\cup^{k+1} B^3$.
Furthermore, we denote 
by $\mathrm{Diff}(S^3_{(k+1)}, \partial S^3_{(k+1)})$ the
topological group of diffeomorphisms of $S^3_{(k+1)}$ which
restrict to the identity on the boundary. Note that
$S^3_{(k+1)} = S^3 \setminus \cup^{k+1} \Int{B^3}$.

The lemma below follows from \cite[Proposition~1, p.~120]{Cerf}.

\begin{lemma}\label{lemma1}
The canonical map $\mathrm{Diff}(S^3, \cup^{k+1} B^3) \to
\mathrm{Diff}(S^3_{(k+1)}, \partial S^3_{(k+1)})$ induces
isomorphisms
$$\pi_i(\mathrm{Diff}(S^3, \cup^{k+1} B^3)) \to
\pi_i(\mathrm{Diff}(S^3_{(k+1)}, \partial S^3_{(k+1)}))$$ 
for all $i$.
\end{lemma}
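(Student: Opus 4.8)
It suffices to show that the canonical restriction map is a weak homotopy equivalence, since this is exactly what yields isomorphisms on $\pi_i$ for all $i$. Write $A = \mathrm{Diff}(S^3, \cup^{k+1} B^3)$ and $B = \mathrm{Diff}(S^3_{(k+1)}, \partial S^3_{(k+1)})$, and let $\Phi : A \to B$ be the map of the statement. First I would record that $\Phi$ is injective: a diffeomorphism $\phi \in A$ restricts to the inclusion $j_{k+1}$ on $\cup^{k+1} B^3$, so it preserves $S^3_{(k+1)} = S^3 \setminus \cup^{k+1}\Int B^3$, fixes $\partial S^3_{(k+1)}$ pointwise, and is completely determined by $\phi|_{S^3_{(k+1)}}$. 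The crux is therefore to identify the image of $\Phi$ and to prove that it includes into $B$ as a weak homotopy equivalence.

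The image of $\Phi$ is the subgroup $B_\infty \subseteq B$ of those $\psi$ whose $\infty$-jet along $\partial S^3_{(k+1)}$ coincides with that of the identity. Indeed, extending such a $\psi$ by the identity across $\cup^{k+1} B^3$ gives a smooth diffeomorphism of $S^3$---a standard one-variable argument along a collar shows that the glued map is $C^\infty$ exactly when all normal derivatives match---and this extension lies in $A$ and is inverse to $\Phi$. Hence $\Phi$ is a homeomorphism of $A$ onto $B_\infty$, and it remains to show that $B_\infty \hookrightarrow B$ is a weak homotopy equivalence. To this end I would use the restriction map $\rho : B \to \mathcal{J}$ to the space $\mathcal{J}$ of $\infty$-jets along $\partial S^3_{(k+1)}$ of diffeomorphisms fixing the boundary pointwise, whose fibre over the trivial jet is precisely $B_\infty$. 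The space $\mathcal{J}$ is contractible: a straight-line homotopy in jet coordinates contracts it to the trivial jet, the normal part of the first derivative remaining positive throughout, so that invertibility is preserved.

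The key input, and the step I expect to be the main obstacle, is that $\rho$ is a locally trivial (hence Serre) fibration; this is exactly where Cerf's Proposition~1 on p.~120 of \cite{Cerf} is used, restriction of diffeomorphism groups to boundary germ/jet data being a fibration by the parametrized isotopy extension theorem. Granting this, the homotopy exact sequence of $B_\infty \to B \to \mathcal{J}$ together with the contractibility of $\mathcal{J}$ shows that $B_\infty \hookrightarrow B$ induces isomorphisms on all homotopy groups; composed with $A \cong B_\infty$ this proves the lemma. The remaining care is routine but must be carried out honestly: checking that extension by the identity is smooth under the jet condition, and verifying the fibration hypotheses of Cerf's statement in this setting. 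An equivalent route, closer to the collar formulation, is to deform $B$ onto the subgroup of diffeomorphisms that are the identity near $\partial S^3_{(k+1)}$ by pushing the non-trivial behaviour of each $\psi$ inward through a fixed collar, invoking the contractibility of the space of collars.
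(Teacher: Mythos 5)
Your proposal is correct, and it supplies substantially more than the paper itself does: the paper's entire ``proof'' of this lemma is the citation of Cerf's Proposition~1 (p.~120), whereas you reconstruct the argument that this citation encapsulates, deferring to Cerf only for the fibration property of the jet-restriction map. Your decomposition is the right one: $\Phi$ is a homeomorphism of $\mathrm{Diff}(S^3, \cup^{k+1}B^3)$ onto the subgroup $B_\infty \subseteq B$ of boundary-fixing diffeomorphisms of $S^3_{(k+1)}$ with trivial $\infty$-jet along $\partial S^3_{(k+1)}$ --- and your identification of the image is sound in both directions, since smoothness of $\phi \in A$ on all of $S^3$ forces every derivative along $\partial B^3_i$ to agree with that of the identity (the ball's inward directions already span the tangent space), while matching $\infty$-jets make the extension by the identity smooth. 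Two points deserve the ``honest care'' you flag. First, if $\mathcal{J}$ is defined as the space of jets \emph{realized} by diffeomorphisms, your straight-line contraction is only legitimate once you know this set equals the set of all formal jets whose $1$-jet has positive normal component; that needs Borel's theorem on realizing formal jets together with a collar interpolation to produce a global boundary-fixing diffeomorphism, so it is cleaner to define $\mathcal{J}$ formally and let surjectivity of $\rho$ onto the connected base come from path lifting. Second, the exact-sequence argument does give the $\pi_0$ case of the lemma, but injectivity on $\pi_0$ uses that $\pi_1(\mathcal{J})$ is trivial, which you should say explicitly since the lemma claims isomorphisms for \emph{all} $i$. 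Your alternative collar route (retracting $B$ onto diffeomorphisms that are the identity near the boundary, using contractibility of the space of collars) is equally standard and would work as well. In short: what your route buys is a genuine, nearly self-contained proof modulo one Cerf-type fibration theorem; what the paper's one-line citation buys is brevity, since Cerf's proposition delivers the conclusion directly.
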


Now consider the natural map
$$\varphi : \mathrm{Diff}(S^3) \to \mathrm{Emb}(\cup^{k+1} B^3, S^3)$$
that sends each diffeomorphism of $S^3$ to its restriction to
$\cup^{k+1} B^3$. The following is a consequence of the Cerf--Palais 
fibration theorem (see \cite[Appendice]{Cerf}, \cite{Palais}).

\begin{lemma}\label{lemma:CP}
The natural map $\varphi$ as above 
is the projection of a locally trivial fiber bundle with fiber
$\mathrm{Diff}(S^3, \cup^{k+1} B^3)$.
\end{lemma}

Therefore, we have the homotopy exact sequence: 
\begin{eqnarray}
& & \pi_2(\mathrm{Diff}(S^3), \id) \to
\pi_2(\mathrm{Emb}(\cup^{k+1} B^3, S^3), j_{k+1}) \to
\pi_1(\mathrm{Diff}(S^3, \cup^{k+1} B^3), \id)
\nonumber \\
& \to & \pi_1(\mathrm{Diff}(S^3), \id) \to
\pi_1(\mathrm{Emb}(\cup^{k+1} B^3, S^3), j_{k+1}) \to \cdots.
\label{eqn1}
\end{eqnarray}

Let $\mathbb{F}_{k+1}(S^3)$ be the
configuration space of $k+1$ points in $S^3$. This space can be 
naturally identified with $\mathrm{Emb}(\{0, 1, \ldots, k\}, S^3)$.

\begin{lemma}
The space $\mathrm{Emb}(\cup^{k+1} B^3, S^3)$ is homotopy
equivalent to $\mathbb{F}_{k+1}(S^3) \times O(3)^{k+1}$.
\end{lemma}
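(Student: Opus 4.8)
The plan is to realize the equivalence by an explicit ``first-order data at the centers'' map. Denote by $c_0,\dots,c_k$ the centers of the $k+1$ balls making up $\cup^{k+1}B^3$, and let $\mathrm{Fr}(S^3)$ be the bundle whose fiber over $p\in S^3$ is the set of linear isomorphisms $\R^3\to T_pS^3$. Since the $i$-th copy of $B^3$ lies in $\R^3$, the derivative $dg_{c_i}$ of an embedding $g$ is such a frame at $g(c_i)$, and I would define
$$J\colon \mathrm{Emb}(\cup^{k+1}B^3,S^3)\to E,\qquad J(g)=\big(g(c_0),\dots,g(c_k);\,dg_{c_0},\dots,dg_{c_k}\big),$$
where $E$ is the total space of the pullback of $\mathrm{Fr}(S^3)^{k+1}$ along the inclusion $\mathbb{F}_{k+1}(S^3)\hookrightarrow (S^3)^{k+1}$. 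The image of $J$ indeed lands in $E$, since an embedding carries the centers to distinct points. The proof then splits into identifying $E$ up to homotopy and showing that $J$ is a homotopy equivalence.

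The first part is formal. As $S^3$ is a Lie group it is parallelizable, so $\mathrm{Fr}(S^3)\cong S^3\times \GL(3,\R)$, whence $E\cong \mathbb{F}_{k+1}(S^3)\times \GL(3,\R)^{k+1}$. Gram--Schmidt gives a deformation retraction of $\GL(3,\R)$ onto $O(3)$, so $E\simeq \mathbb{F}_{k+1}(S^3)\times O(3)^{k+1}$, which is the target asserted by the lemma. (A consistency check: for $k=0$ this reads $\mathrm{Emb}(B^3,S^3)\simeq S^3\times O(3)$, the frame bundle of $S^3$.)

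The second part is the heart of the matter, and I would handle it by a fibration comparison. Both the source and $E$ map to $\mathbb{F}_{k+1}(S^3)$ by recording the images of the centers; for the source this restriction-to-centers map $r$ is a locally trivial fiber bundle by the same Cerf--Palais fibration theorem invoked above, while $E\to\mathbb{F}_{k+1}(S^3)$ is a fiber bundle by construction. Since $J$ commutes with these projections, it suffices, by the standard fact that a fiberwise map of fibrations over a common base inducing (weak) equivalences on fibers is itself a (weak) equivalence, to prove that $J$ restricts to a homotopy equivalence on the fiber over a fixed configuration $(p_0,\dots,p_k)$. On that fiber the source is the space $\mathrm{Emb}_c$ of embeddings with each $g(c_i)=p_i$ fixed, the target is $\prod_i \mathrm{Fr}_{p_i}(S^3)\cong \GL(3,\R)^{k+1}$, and $J$ sends $g$ to the tuple $(dg_{c_i})_i$; all spaces in play have the homotopy type of CW complexes, so the comparison on homotopy groups via the five lemma upgrades to an honest homotopy equivalence.

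Finally, to see that this fiber map is an equivalence I would use a linearization (Alexander-type scaling) argument applied independently near each center. After a preliminary deformation that shrinks every ball toward its own center---which keeps the balls embedded and pairwise disjoint and pushes the image of ball $i$ into a fixed linear chart around $p_i$---one applies, in such charts, the canonical isotopy $g_t(x)=t^{-1}g\big(c_i+t(x-c_i)\big)$ on each ball. For $t\in(0,1]$ this is an embedding fixing $g(c_i)$ and $dg_{c_i}$, and as $t\to 0$ it converges to the linear map $x\mapsto dg_{c_i}(x-c_i)$; this exhibits a deformation retraction of $\mathrm{Emb}_c$ onto the subspace of maps that are linear near each center, on which the derivative map is a homeomorphism onto $\GL(3,\R)^{k+1}$. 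I expect the main obstacle to be precisely this step: making the scaling isotopy globally well defined and smooth in all parameters while guaranteeing that the balls stay disjoint and that their images remain inside the chosen charts throughout. Disjointness is easy (shrinking a ball toward its center keeps it inside itself), but the simultaneous control of all $k+1$ charts together with smooth dependence on $g$ and $t$ is where the technical care is concentrated.
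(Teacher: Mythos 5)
Your proposal is correct and is essentially the paper's own argument: the paper defines the same evaluation map (images of the centers together with the normalized differentials, using a fixed trivialization of $TS^3$, which is your frame map followed by Gram--Schmidt) and simply cites Cerf's Appendice, \S 5, Proposition~3 for the fact that it is a homotopy equivalence. The fibration comparison over $\mathbb{F}_{k+1}(S^3)$ via Cerf--Palais and the shrinking/Alexander-scaling linearization that you spell out are exactly the standard details behind that citation.
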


\begin{proof}
For a given embedding $\eta : \cup^{k+1} B^3 \to S^3$, we associate
the element of $\mathbb{F}_{k+1} (S^3)$ which sends the $i$-th 
point to the $\eta$-image of the center of the $i$-th $3$-ball. 
Furthermore, by
associating the normalized differential of $\eta$ at each center, we
get an element of $O(3)^{k+1}$. (Note that the tangent bundle
$TS^3$ of $S^3$ is trivial, and we fix its trivialization here.) 
Then, we can show that the map $\mathrm{Emb}(\cup^{k+1} B^3, S^3) \to
\mathbb{F}_{k+1}(S^3) \times O(3)^{k+1}$ thus obtained gives a homotopy
equivalence. (For example, see 
\cite[Appendice, \S5, Proposition~3]{Cerf}.)
\end{proof}

Recall that $\mathrm{Diff}(S^3) \simeq O(4)$. Furthermore, $\mathbb{F}_{k+1}(S^3)$ is $1$-connected 
(see \cite{FH}, \cite[Proof of Proposition~2.30]{Funar}) 
and $\pi_2(O(3)) = 0$. Thus, the
exact sequence (\ref{eqn1}) turns into
\begin{eqnarray*}
& 0 & \to \pi_2(\mathbb{F}_{k+1} (S^3)) \to 
\pi_1(\mathrm{Diff}(S^3, \cup^{k+1} B^3), \id) 
\to \pi_1(O(4), \id) 
\\
& & \to \pi_1(O(3)^{k+1}, \id) \to \cdots. 
\end{eqnarray*}

Note that $\pi_1(O(4), \id) \cong \mathbb{Z}_2$ and
$\pi_1(O(3)^{k+1}, \id) \cong (\mathbb{Z}_2)^{k+1}$. By choosing the
standard embedding $j_{k+1}$ so that it is equivariant with respect
to the natural $SO(2)$-actions, we see that the homomorphism
$\pi_1(O(4), \id) \to \pi_1(O(3)^{k+1}, \id)$ sends the generator $1 \in
\mathbb{Z}_2$ to $(1, 1, \ldots, 1) \in (\mathbb{Z}_2)^{k+1}$. In 
particular, it is injective. Thus, we have that the boundary
homomorphism
\begin{equation}
\pi_2(\mathbb{F}_{k+1} (S^3)) \to 
\pi_1(\mathrm{Diff}(S^3, \cup^{k+1}  B^3), \id) 
\label{eqn3}
\end{equation}
is an isomorphism.

By \cite{FH} and \cite[Proof of Proposition~2.30]{Funar},
we have the following important result.

\begin{lemma}\label{lemma:iso}
The homotopy group $\pi_2(\mathbb{F}_{k+1}(S^3))$ is isomorphic to
$\mathbb{Z}^{k(k-1)/2}$.
\end{lemma}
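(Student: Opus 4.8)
The plan is to compute $\pi_2(\mathbb{F}_{k+1}(S^3))$ by induction on $k$ using the Fadell--Neuwirth fibrations, which are the standard tool for configuration spaces. The key input is the fibration that forgets the last point,
$$
\mathbb{F}_{k+1}(S^3) \to \mathbb{F}_{k}(S^3),
$$
whose fiber over a configuration $\{p_1,\ldots,p_k\}$ is $S^3 \setminus \{p_1,\ldots,p_k\}$, i.e.\ $S^3$ with $k$ points removed. First I would record the low-dimensional homotopy of this fiber: $S^3 \setminus \{k \text{ points}\}$ is homotopy equivalent to a wedge of $(k-1)$ copies of $S^2$ (removing one point gives $\mathbb{R}^3 \simeq *$, and each further point removed adds a linking $2$-sphere), so it is simply connected with $\pi_2 \cong \mathbb{Z}^{k-1}$. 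The base cases $\mathbb{F}_1(S^3) = S^3$ and $\mathbb{F}_2(S^3) \simeq S^3 \setminus \{pt\} \simeq *$ (after projecting away one point) give $\pi_2 = 0$, matching $\mathbb{Z}^{0}$ and $\mathbb{Z}^{0}$ for $k=0,1$.

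Next I would write down the relevant segment of the long exact sequence of the Fadell--Neuwirth fibration, abbreviating $F_k = S^3 \setminus \{k\text{ points}\}$ for the fiber:
\begin{equation*}
\pi_3(\mathbb{F}_{k}(S^3)) \to \pi_2(F_k) \to \pi_2(\mathbb{F}_{k+1}(S^3)) \to \pi_2(\mathbb{F}_{k}(S^3)) \to \pi_1(F_k).
\end{equation*}
Since the fiber $F_k$ is simply connected, the last term vanishes, so the map $\pi_2(\mathbb{F}_{k+1}(S^3)) \to \pi_2(\mathbb{F}_k(S^3))$ is surjective. The inductive hypothesis gives $\pi_2(\mathbb{F}_k(S^3)) \cong \mathbb{Z}^{k(k-1)/2}$ and $\pi_2(F_k) \cong \mathbb{Z}^{k-1}$, and these sit in a short exact sequence once I show the connecting map $\pi_3(\mathbb{F}_k(S^3)) \to \pi_2(F_k)$ is zero, giving
$$
0 \to \mathbb{Z}^{k-1} \to \pi_2(\mathbb{F}_{k+1}(S^3)) \to \mathbb{Z}^{(k-1)k/2} \to 0,
$$
whose extension must split since the quotient is free abelian, yielding $\pi_2(\mathbb{F}_{k+1}(S^3)) \cong \mathbb{Z}^{(k-1)k/2 + (k-1)} = \mathbb{Z}^{k(k-1)/2}$ with the index shifted — i.e.\ the desired formula for $k+1$.

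The main obstacle is controlling the connecting homomorphism $\pi_3(\mathbb{F}_k(S^3)) \to \pi_2(F_k)$, equivalently showing that the sequence splits on the relevant end; the cleanest route is to exhibit a section of the fibration or to argue that $\pi_2(\mathbb{F}_{k+1}(S^3)) \to \pi_2(F_k)$ admits a retraction. A convenient section comes from the fact that on $S^3$ one can consistently add a $(k+1)$-st point near a fixed basepoint disjoint from a given configuration; this splits the sequence and forces the injectivity of $\mathbb{Z}^{k-1} \to \pi_2(\mathbb{F}_{k+1}(S^3))$ together with vanishing of the connecting map. Since the excerpt permits invoking \cite{FH} and \cite[Proof of Proposition~2.30]{Funar} directly, I would in fact shortcut the entire induction by citing these references for the rank computation and simply verify that the resulting group is free abelian of rank $k(k-1)/2$, noting that freeness is automatic because each $\pi_2$ of a configuration space of a simply connected manifold is a subgroup-and-extension of free abelian groups.
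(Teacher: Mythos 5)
Your overall strategy (induction over the Fadell--Neuwirth fibrations) is the right one; in fact the paper offers no argument at all for this lemma, simply citing \cite{FH} and \cite[Proof of Proposition~2.30]{Funar}, and the computation you sketch is the standard one behind those references. However, as written your proposal contains one step that fails outright: the section. On the \emph{compact} manifold $S^3$ there is no basepoint disjoint from every configuration, so the rule ``add a $(k+1)$-st point near a fixed basepoint'' cannot be made continuous (configurations passing through, or accumulating at, the chosen basepoint destroy it); the ``add a point far away'' section is special to $\mathbb{R}^3$ and does not transplant to $S^3$. Since the vanishing of the connecting homomorphisms $\pi_3(\mathbb{F}_k(S^3)) \to \pi_2(F_k)$ is precisely what your induction hinges on, this is a genuine gap. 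Two standard repairs: (i) use that $S^3$ is parallelizable, and define the section by pushing a new point off $p_k$ along a nowhere-zero vector field at a scale smaller than half the minimal pairwise distance of the configuration --- this is continuous in the configuration and yields an honest section, killing all connecting maps; or (ii) avoid sections on $S^3$ altogether by using its Lie group structure: left translation gives $\mathbb{F}_{k+1}(S^3) \cong S^3 \times \mathbb{F}_k(S^3 \setminus \{\mathrm{pt}\}) = S^3 \times \mathbb{F}_k(\mathbb{R}^3)$, so $\pi_2(\mathbb{F}_{k+1}(S^3)) \cong \pi_2(\mathbb{F}_k(\mathbb{R}^3))$, and one then runs your induction on $\mathbb{F}_k(\mathbb{R}^3)$, where the ``far away'' sections genuinely exist. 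Route (ii) also explains the index shift $k+1 \mapsto k$ in the statement and is exactly the identification $\pi_2(\mathbb{F}_k(\mathbb{R}^3)) \cong \pi_2(\mathbb{F}_{k+1}(S^3))$ that the paper invokes from \cite[p.~38]{FH} in the proof of Lemma~\ref{lemma:cl}.

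There is also an off-by-one slip in your inductive bookkeeping. Under the lemma's convention ($k+1$ points give rank $k(k-1)/2$), the inductive hypothesis must read $\pi_2(\mathbb{F}_k(S^3)) \cong \mathbb{Z}^{(k-1)(k-2)/2}$, not $\mathbb{Z}^{k(k-1)/2}$; with the ranks as you wrote them the arithmetic does not close, since $(k-1)k/2 + (k-1) = (k-1)(k+2)/2$, which is neither $k(k-1)/2$ nor the formula with $k$ replaced by $k+1$. With the corrected hypothesis everything matches: $(k-1)(k-2)/2 + (k-1) = k(k-1)/2$. A minor further point: $\mathbb{F}_2(S^3)$ is homotopy equivalent to $S^3$ (total space of a fibration with contractible fiber over $S^3$), not to a point, though $\pi_2 = 0$ holds either way, so your base cases survive. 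Your remarks on splitting and freeness are fine once a genuine section is in hand: the quotient in each short exact sequence is free abelian, so the extension splits and the group is free abelian of the stated rank.
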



Note that, for 
a smooth fiber bundle $S^3_{(k+1)} \hookrightarrow E^5 \to S^2$
with structure group $\mathrm{Diff}(S^3_{(k+1)}, \partial S^3_{(k+1)})$,
its characteristic map 
is an element of 
$$\pi_1(\mathrm{Diff}(S^3_{(k+1)}, \partial S^3_{(k+1)}), \id),$$ 
which is isomorphic to 
$\pi_1(\mathrm{Diff}(S^3, \cup^{k+1} B^3), \id) \cong
\mathbb{Z}^{k(k-1)/2}$
by Lemma~\ref{lemma1}, (\ref{eqn3}) and Lemma~\ref{lemma:iso}.

In fact, given such a smooth fiber bundle
$\pi : E^5 \to S^2$, 
one can consider $S^2=D_1^2 \cup D_2^2$,
where $D_i^2$, $i=1,2$, denote the $2$-dimensional 
closed disk.
Since each $D_i^2$ is contractible, the restriction
$\pi : \pi^{-1} (D_i^2) \to D_i^2$
is a trivial fiber bundle with fiber $S^3_{(k+1)}$, 
and we have $\pi^{-1} (D_i^2) \cong S^3_{(k+1)} \times D_i^2$.
Hence, we can recover the total space 
$$E^5 = (S^3_{(k+1)} \times D_1^2) \cup_h 
(S^3_{(k+1)} \times D_2^2)$$ 
for some diffeomorphism $h: S^3_{(k+1)} \times \partial D_2^2 \to
S^3_{(k+1)} \times \partial D_1^2$ defined by 
$h(x, t)= (\alpha(t)(x), t)$, where $\alpha : S^1 = \partial D^2 
\to \mathrm{Diff}(S^3_{(k+1)}, \partial S^3_{(k+1)})$ 
corresponds to the characteristic map. Therefore, the structure of 
the fiber bundle is 
completely determined by the homotopy class
$[\alpha] \in \pi_1(\mathrm{Diff}(S^3_{(k+1)}, 
\partial S^3_{(k+1)}), \id)$.

\section{Characterization of NS-pairs}\label{section3}

For a non-negative integer $k$, let 
\begin{equation}
S^3_{(k+1)} \hookrightarrow E^5 \stackrel{\pi}{\rightarrow} S^2
\label{eq:bundle}
\end{equation}
be a smooth fiber bundle
such that its restriction to the boundary 
$\partial S^3_{(k+1)} \hookrightarrow \partial E^5 \rightarrow S^2$ 
is a trivial bundle. 
In this section, we characterize such fiber
bundles that arise from an NS-pair $(S^5, K)$ with
$K \cong \cup^{k+1}S^2$.

We start by gluing the trivial bundle 
$\cup^k (B^3 \times S^2) \to S^2$ along the $k$ boundary
components to get the $B^3$-fibration
\begin{equation}
\widetilde{\pi} : Y = E^5 \cup (\cup^k (B^3 \times S^2)) \rightarrow S^2.
\label{eq:tilde-psi}
\end{equation}
This fibration is trivial, since the structure
group $\mathrm{Diff}(B^3, \partial B^3)$ 
is contractible by Hatcher's solution to the Smale Conjecture
\cite{Hatcher}, where $\mathrm{Diff}(B^3, \partial B^3)$ denotes
the space of those diffeomorphisms of $B^3$ which fix
$\partial B^3$ pointwise.
Therefore, the total space $Y$ of the fibration (\ref{eq:tilde-psi})
is diffeomorphic to $B^3 \times S^2$.
Then, by gluing $B^3_0 \times S^2$ to $Y = B^3 \times S^2$ by the map
$\partial B^3_0 \times S^2 \to \partial B^3 \times S^2$ given by
$(x, y) \mapsto (y, x)$, we get the sphere $S^5$,
where $B^3_0$ is a copy of the closed $3$-dimensional ball.
Set $S^2_0 = x_0 \times S^2$, where $x_0$ is the
center of $B^3_0$, and we write $N(S^2_0) = B^3_0 \times S^2$,
which is identified with the closed tubular neighborhood
of $S^2_0$ in $S^5$.

To fix the notation we write $\cup^{k+1} B^3 = \cup^k_{i=0} B^3_i$, 
and denote by $x_i$ the
center of $B^3_i$, where we consider
$\cup^k B^3 = \cup_{i=1}^k B^3_i$.
We also write $S^2_i = x_i \times S^2$, $i = 1, 2, \ldots, k$.
Note that the $2$-spheres $S^2_i$, $i = 0, 1, \ldots, k$, are all
embedded in $S^5$ in a standard way. Furthermore, 
each of $S^2_i$, $i = 1, 2, \ldots, k$, has
linking number $\pm 1$ with $S^2_0$.
In the following discussions, we orient
$S^2_i$, $i = 0, 1, 2, \ldots, k$, in such a way that
the linking number of $S^2_i$ with $S^2_0$
is equal to $+1$, $i = 1, 2, \ldots, k$.

For $y \in S^2$,
we have $\cup^k_{i=1} (B^3_i \times y) \subset 
\widetilde{\pi}^{-1}(y) \cong B^3$. 
Therefore, to each $y \in S^2$ we can naturally associate an element of
the $k$-point configuration space
$\mathbb{F}_k (\Int B^3) \cong \mathbb{F}_k (\mathbb{R}^3)$. 
This defines a \emph{classifying map} 
$c : S^2 \to \mathbb{F}_k(\mathbb{R}^3)$.


Then, we have the following.

\begin{lemma}\label{lemma:cl}
The isomorphism classes of the 
fibrations as in \textup{(\ref{eq:bundle})}
are in one-to-one correspondence with
$\pi_2(\mathbb{F}_k(\mathbb{R}^3)) \cong \Z^{k(k-1)/2}$.
The correspondence is given by associating
the homotopy class of the classifying map $c$. 
\end{lemma}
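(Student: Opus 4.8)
The plan is to reduce the statement to a computation of the fundamental group of the relevant diffeomorphism group, reusing the machinery of Section~\ref{section2}, and then to check that the resulting bijection is the one induced by the classifying map $c$. By the discussion at the end of Section~\ref{section2}, the isomorphism classes of bundles as in (\ref{eq:bundle}) are in canonical bijection with $\pi_1(\mathrm{Diff}(S^3_{(k+1)}, \partial S^3_{(k+1)}), \id)$ via the characteristic map of the clutching decomposition $E^5 = (S^3_{(k+1)} \times D^2_1) \cup_h (S^3_{(k+1)} \times D^2_2)$. So it suffices to produce a natural isomorphism $\pi_1(\mathrm{Diff}(S^3_{(k+1)}, \partial S^3_{(k+1)}), \id) \cong \pi_2(\mathbb{F}_k(\mathbb{R}^3))$ and to verify that it is realized by $c$.

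For the group identification, first I would cap off $k$ of the $k+1$ boundary spheres of $S^3_{(k+1)}$ by $3$-balls. Writing $B^3 = S^3 \setminus \Int B^3_0$, this exhibits $S^3_{(k+1)}$ as $B^3$ with the interiors of $k$ disjoint sub-balls $\cup^k B^3_i$ removed. Since a diffeomorphism fixing $\partial S^3_{(k+1)}$ pointwise extends by the identity over the caps, and conversely restricts back, I get an isomorphism of topological groups
\[
\mathrm{Diff}(S^3_{(k+1)}, \partial S^3_{(k+1)}) \cong \mathrm{Diff}(B^3, \partial B^3 \cup (\cup^k B^3_i)),
\]
where the right-hand side denotes diffeomorphisms of $B^3$ fixing $\partial B^3$ pointwise and restricting to the inclusion on the $k$ sub-balls.

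Next I would run the $B^3$-analogue of the argument of Section~\ref{section2}. By the Cerf--Palais fibration theorem, the restriction map $\mathrm{Diff}(B^3, \partial B^3) \to \mathrm{Emb}(\cup^k B^3, \Int B^3)$ is a locally trivial fiber bundle whose fiber is exactly $\mathrm{Diff}(B^3, \partial B^3 \cup (\cup^k B^3_i))$. The crucial simplification is that, by Hatcher's solution to the Smale Conjecture, the total space $\mathrm{Diff}(B^3, \partial B^3)$ is contractible; hence the connecting homomorphism of the homotopy exact sequence gives isomorphisms $\pi_i(\mathrm{Diff}(B^3, \partial B^3 \cup (\cup^k B^3_i)), \id) \cong \pi_{i+1}(\mathrm{Emb}(\cup^k B^3, \Int B^3), j_k)$ for all $i$, in particular for $i = 1$. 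As in the earlier lemma, $\mathrm{Emb}(\cup^k B^3, \Int B^3) \simeq \mathbb{F}_k(\mathbb{R}^3) \times O(3)^k$; since $\pi_2(O(3)) = 0$, this yields $\pi_2(\mathrm{Emb}(\cup^k B^3, \Int B^3)) \cong \pi_2(\mathbb{F}_k(\mathbb{R}^3))$. Chaining these isomorphisms produces $\pi_1(\mathrm{Diff}(S^3_{(k+1)}, \partial S^3_{(k+1)}), \id) \cong \pi_2(\mathbb{F}_k(\mathbb{R}^3))$, and the final identification $\pi_2(\mathbb{F}_k(\mathbb{R}^3)) \cong \Z^{k(k-1)/2}$ follows either from the references \cite{FH}, \cite{Funar} as in Lemma~\ref{lemma:iso}, or from the forgetful fibration $\mathbb{F}_{k+1}(S^3) \to S^3$ with fiber $\mathbb{F}_k(\mathbb{R}^3)$, which induces an isomorphism on $\pi_2$.

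The hard part will be the last step: verifying that this abstract chain of isomorphisms coincides with the geometric map that sends a bundle to $[c]$. Here I would unwind the definitions, tracking the clutching function $h(x,t) = (\alpha(t)(x), t)$ through the capping and through the trivialization $Y \cong B^3 \times S^2$; contractibility of $\mathrm{Diff}(B^3, \partial B^3)$ guarantees that this trivialization, and hence the family of $k$ centers recorded by $c$, is well-defined up to homotopy. One then checks that the loop $\alpha$ of diffeomorphisms, read through the Cerf--Palais connecting map, is precisely the based $2$-sphere of configurations traced out by the centers of the $B^3_i \times y$ inside $\widetilde{\pi}^{-1}(y) \cong B^3$, i.e.\ the classifying map $c$. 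The only point requiring care is that the framing data in the $O(3)^k$ factor contributes nothing, which is exactly ensured by $\pi_2(O(3)) = 0$; this is why remembering only the centers (the map $c$ to the configuration space, rather than to the full embedding space) loses no information.
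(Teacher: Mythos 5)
Your proposal is correct in substance but takes a genuinely different route from the paper for the key group computation. The paper stays on $S^3$ with $k+1$ balls: it uses the Cerf--Palais fibration $\mathrm{Diff}(S^3) \to \mathrm{Emb}(\cup^{k+1}B^3, S^3)$, where the total space is only homotopy equivalent to $O(4)$ (Hatcher), so it must additionally argue that $\pi_1(O(4)) \to \pi_1(O(3)^{k+1})$ is injective (via an $SO(2)$-equivariant choice of the standard embedding) to conclude that the connecting homomorphism $\pi_2(\mathbb{F}_{k+1}(S^3)) \to \pi_1(\mathrm{Diff}(S^3, \cup^{k+1}B^3))$ is an isomorphism, and then it imports the natural isomorphism $\pi_2(\mathbb{F}_k(\mathbb{R}^3)) \cong \pi_2(\mathbb{F}_{k+1}(S^3))$ from \cite{FH}. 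You instead absorb the $(k+1)$-st ball into the boundary and work with the restriction fibration $\mathrm{Diff}(B^3, \partial B^3) \to \mathrm{Emb}(\cup^k B^3, \Int B^3)$; since Hatcher gives \emph{contractibility} of $\mathrm{Diff}(B^3,\partial B^3)$, the connecting homomorphism is an isomorphism with no extra argument, and you land directly in $\pi_2(\mathbb{F}_k(\mathbb{R}^3))$ with no $S^3$-to-$\mathbb{R}^3$ translation. This buys two things: it eliminates the $O(4)$-versus-$O(3)^{k+1}$ step entirely, and it makes the compatibility with the classifying map $c$ more transparent, since both the trivialization of $\widetilde{\pi}$ defining $c$ and the connecting map of your fibration are governed by the same contractibility statement; your description of the connecting map (extend the loop $\alpha$ over a disk in the contractible total space, then restrict to the sub-balls) is precisely the paper's extension $\tilde{\alpha}$ followed by $\varphi$, transplanted to the ball. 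Your fallback identification $\pi_2(\mathbb{F}_k(\mathbb{R}^3)) \cong \pi_2(\mathbb{F}_{k+1}(S^3))$ via the forgetful fibration does hold, but note it needs a section (e.g.\ from the group structure of $S^3$) to kill the image of $\pi_3(S^3)$; citing \cite{FH} as the paper does is safer.

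One technical slip to repair: the claimed \emph{isomorphism of topological groups} $\mathrm{Diff}(S^3_{(k+1)}, \partial S^3_{(k+1)}) \cong \mathrm{Diff}(B^3, \partial B^3 \cup (\cup^k B^3_i))$ via ``extend by the identity'' is false as stated. A diffeomorphism fixing $\partial S^3_{(k+1)}$ pointwise need not have all normal derivatives equal to those of the identity along the boundary spheres, so its extension by the identity over the caps is in general only a homeomorphism, not smooth; the restriction homomorphism in the other direction is injective but not surjective. What is true, and all your argument uses, is that restriction induces isomorphisms on all homotopy groups --- this is exactly Cerf's Proposition~1, p.~120 of \cite{Cerf}, i.e.\ the ball analogue of Lemma~\ref{lemma1}. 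With that substitution your proof goes through.
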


Recall that according to \cite[Lemma~2.31]{Funar},
a fiber bundle (\ref{eq:bundle}) corresponds to
the element $(\mathrm{lk}(S^2_i, S^2_j))_{1 \leq i < j \leq k}
\in \Z^{k(k-1)/2}$ in the above correspondence, 
where $\mathrm{lk}$ denotes the linking number in $S^5$, and we fix
orientations of $S^2_i$, $i = 1, 2, \ldots, k$, and $S^5$.

\begin{proof}[Proof of Lemma~\textup{\ref{lemma:cl}}]
As has been seen in Section~\ref{section2}, the isomorphism
classes of the bundles in question are in one-to-one
correspondence with $\pi_2(\mathbb{F}_{k+1} (S^3)) \cong
\pi_1(\mathrm{Diff}(S^3, \cup^{k+1}  B^3), \id)$.
On the other hand, it is known that
$\pi_2(\mathbb{F}_k(\mathbb{R}^3))$ is naturally
isomorphic to $\pi_2(\mathbb{F}_{k+1} (S^3))$
(see \cite[p.~38]{FH}). 

Recall the locally trivial fiber bundle
$$\mathrm{Diff}(S^3, \cup^{k+1} B^3) \stackrel{\iota}{\longrightarrow} 
\mathrm{Diff}(S^3) \stackrel{\varphi}{\longrightarrow} 
\mathrm{Emb}(\cup^{k+1} B^3, S^3)$$
of Lemma~\ref{lemma:CP},
where $\iota$ is the natural inclusion map.
For the homotopy class $[\alpha] \in \pi_1(
\mathrm{Diff}(S^3_{(k+1)}, \partial S^3_{(k+1)}))
\cong \pi_1(\mathrm{Diff}(S^3, \cup^{k+1} B^3))$
of the characteristic map,
its $\iota_*$-image vanishes in
$\pi_1(\mathrm{Diff}(S^3))$, so that there exists
a continuous map $\tilde{\alpha} : D^2 \to \mathrm{Diff}(S^3)$
which extends $\iota \circ \alpha : S^1 \to  \mathrm{Diff}(S^3)$. 
Then, the homotopy class of $\varphi \circ \tilde{\alpha} : D^2 \to 
\mathrm{Emb}(\cup^{k+1} B^3, S^3)$ in
$\pi_2(\mathrm{Emb}(\cup^{k+1} B^3, S^3))
\cong \pi_2(\mathbb{F}_{k+1}(S^3)) \cong
\pi_2(\mathbb{F}_k(\R^3))$ is the class
corresponding to $[\alpha]$ by the
isomorphism (\ref{eqn3}). By construction,
this coincides with the homotopy class of the
classifying map $c$.
This completes the proof.
\end{proof}

Now, we have the following natural question.
%
%
%
%
%

\begin{problem}
Which elements of $\pi_2 (\mathbb{F}_k(\mathbb{R}^3))
\cong \mathbb{Z}^{k(k-1)/2}$ correspond to an NS-pair?
\end{problem}

%
We answer this question in our main result in this section, as follows.

\begin{theorem} \label{main-theorem} 
The fiber bundle $S^3_{(k+1)}\hookrightarrow E^5 \stackrel{\pi}{\to} S^2$ as
in \textup{(\ref{eq:bundle})} arises from 
an NS-pair if and only if
$\mathrm{det} \left(\mathrm{lk}(S_i^2,S_j^2)\right)_{1 \leq i,j \leq k} 
= \pm 1$, where $\mathrm{lk}(S_i^2,S_i^2) = 0$ for all $1 \leq i \leq k$
by convention.
\end{theorem}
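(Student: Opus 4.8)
The plan is to reconstruct the ambient sphere from the bundle and to detect it homologically. Given the bundle \textup{(\ref{eq:bundle})}, I would glue back a tubular neighborhood $N(K)=\cup_{i=0}^{k}(S^2_i\times D^3)$ of the putative link along $\partial E^5$, matching the trivial boundary fibration with the standard radial fibration $(z,w)\mapsto w/\|w\|$ on each $N(S^2_i)\setminus S^2_i$, and call the resulting closed $5$-manifold $M$. By the definition of an NS-pair, the bundle arises from one exactly when $M\cong S^5$, the required fibration on $S^5\setminus K$ being the one assembled from $\pi$ and the radial pieces. Since the fiber $S^3_{(k+1)}$ is simply connected, so is $E^5$, and van Kampen applied to $M=E^5\cup N(K)$ (the two pieces and their intersection $\cup_i(S^2_i\times S^2)$ being simply connected) shows $M$ is simply connected. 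As the group of homotopy $5$-spheres is trivial, $M\cong S^5$ if and only if $M$ is a homology sphere, i.e. if and only if $H_2(M)=0$.

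First I would compute $H_2(M)$ by Mayer--Vietoris. The Serre spectral sequence of \textup{(\ref{eq:bundle})} collapses (as $\pi_1(S^2)=0$ and all differentials land in zero groups), giving $H_3(E^5)=0$ and $H_2(E^5)\cong\Z\sigma\oplus\bigoplus_{j=1}^{k}\Z f_j$, where $\sigma$ is a section class and $f_j=[\partial_j F]$ are the fiber boundary spheres, with $f_0:=-\sum_{j\ge1}f_j$ forced by $\sum_i[\partial_i F]=0$ in $H_2(F)$. Writing $a_i=[S^2_i\times\{*\}]$ and $\beta_i=[\{*\}\times S^2]$ for the generators of $H_2(S^2_i\times S^2)$ in the $i$-th boundary component, the restriction maps are $\iota_{N*}(a_i)=e_i$, $\iota_{N*}(\beta_i)=0$, $\iota_{E*}(a_i)=f_i$ (with $f_0$ for $i=0$), and $\iota_{E*}(\beta_i)=\sigma+\sum_j m_{ij}f_j$ for integers $m_{ij}$ recording how the section over the $i$-th boundary differs from $\sigma$ (the $\sigma$-coefficient being $1$ since $\beta_i$ maps with degree $1$ to the base). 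Then $H_2(M)$ is the cokernel and $H_3(M)$ the kernel of the Mayer--Vietoris map $\psi=(\iota_{E*},-\iota_{N*})$, so $M\cong S^5$ if and only if $\psi$ is an isomorphism, i.e. $\det\psi=\pm1$.

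Next I would reduce this determinant. In the bases $(a_0,\dots,a_k,\beta_0,\dots,\beta_k)$ and $(\sigma,f_1,\dots,f_k,e_0,\dots,e_k)$, the map $\psi$ has block form $\left(\begin{smallmatrix}P&Q\\-I&0\end{smallmatrix}\right)$, the lower $-I$ coming from $\iota_{N*}(a_i)=e_i$ and the vanishing lower-right block from $\iota_{N*}(\beta_i)=0$. A block-row interchange yields $\det\psi=\det Q$, and $Q$ has top row $(1,\dots,1)$ (the $\sigma$-coefficients) with remaining entries $m_{ij}$; subtracting the first column from the others gives $\det\psi=\det(m_{ij}-m_{0j})_{1\le i,j\le k}$.

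The crux, and the step I expect to be the main obstacle, is to identify $m_{ij}-m_{0j}$ with the linking number $\mathrm{lk}(S^2_i,S^2_j)$. I would argue through the classifying map $c:S^2\to\mathbb{F}_k(\R^3)$ of Section~\ref{section3}: choosing an arc $\gamma_i$ in a reference fiber from the basepoint of $\partial_0 F$ to that of $\partial_i F$, the chain $\gamma_i\times S^2$ has boundary $\beta_i-\beta_0$, while the monodromy drags $\gamma_i$ around the other boundary components, and its winding around the $j$-th component is precisely the coefficient $m_{ij}-m_{0j}$ of $f_j$. By the description cited as \cite[Lemma~2.31]{Funar} before Lemma~\ref{lemma:cl}, this winding equals $\mathrm{lk}(S^2_i,S^2_j)$; equivalently, both $(m_{ij}-m_{0j})_{i<j}$ and $(\mathrm{lk}(S^2_i,S^2_j))_{i<j}$ are the image of the bundle's classifying element under $\pi_2(\mathbb{F}_k(\R^3))\cong\Z^{k(k-1)/2}$, hence agree (and the diagonal entries vanish by the convention $\mathrm{lk}(S^2_i,S^2_i)=0$). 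Combining the steps, $M\cong S^5$ if and only if $\det\bigl(\mathrm{lk}(S^2_i,S^2_j)\bigr)_{1\le i,j\le k}=\pm1$. For the forward implication one reads this off an assumed NS-pair, whose sphere is exactly $M$; for the converse, the assembled fibration on $E^5\cup(N(K)\setminus K)$ exhibits $(S^5,K)$ as an NS-pair realizing \textup{(\ref{eq:bundle})}.
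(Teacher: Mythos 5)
Your proposal has the same skeleton as the paper's proof: glue copies of (sphere)$\times B^3$ back onto $\partial E^5$ compatibly with the radial fibration to form a closed simply connected $5$-manifold (your $M$ is the paper's $X^5$), reduce ``arises from an NS-pair'' to $M \cong S^5$ via the triviality of homotopy $5$-spheres (Lemma~\ref{lem: 0}), and test $H_2(M)=0$ through the determinant of the Mayer--Vietoris map (Lemma~\ref{lem: 1}). Where you genuinely diverge is in computing that map. The paper never works with an abstract basis of $H_2(E^5)$: it uses the Section~\ref{section3} construction (cap off $k$ boundary components by a trivial $B^3$-bundle, identify $Y \cong B^3 \times S^2$ via Hatcher's theorem, glue in $B^3_0 \times S^2$) to realize $E^5 = S^5 \setminus \cup_{i=0}^k \Int N(S^2_i)$ \emph{before} the homology computation, and then takes the Alexander-dual basis $\mu_i$. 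With respect to that basis the coefficients of $\rho$ are linking numbers by the very definition of the linking pairing, so the matrix $R$ contains $\bigl(\mathrm{lk}(S^2_i,S^2_j)\bigr)$ with no further work, and the zero diagonal is the concrete statement that the parallel fibers $\{y_i\}\times S^2$ and $\{x_i\}\times S^2$ of $B^3_i \times S^2$ have zero linking in $S^5$. Your route instead computes $H_2(E^5)$ by the Serre spectral sequence with a section class $\sigma$ and undetermined integers $m_{ij}$; your block-matrix elimination reducing $\det\psi$ to $\det(m_{ij}-m_{0j})_{1\le i,j\le k}$ is correct and is essentially the same elimination that gives $\det R = \pm\det(a_{ij})$ in the paper.

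The step you rightly flag as the crux is, as written, the one real gap: the identification $m_{ij}-m_{0j} = \mathrm{lk}(S^2_i,S^2_j)$. Your argument that ``both tuples are the image of the bundle's classifying element'' is only half supplied. The half concerning $\bigl(\mathrm{lk}(S^2_i,S^2_j)\bigr)_{i<j}$ is indeed \cite[Lemma~2.31]{Funar}, quoted before Lemma~\ref{lemma:cl}; but the half asserting that your homological coefficients $(m_{ij}-m_{0j})_{i<j}$ compute the same element of $\pi_2(\mathbb{F}_k(\R^3)) \cong \Z^{k(k-1)/2}$ under the Fadell--Husseini isomorphism (degrees of the Gauss maps $(x_i-x_j)/\|x_i-x_j\|$) is a genuine lemma, and your sketch via the chain $\gamma_i \times S^2$ and monodromy winding is an outline rather than a proof. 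Relatedly, you cannot dispose of the diagonal ``by convention'': once the boundary trivialization is fixed, $m_{ii}-m_{0i}$ is a well-defined integer, and its vanishing is a statement about the framing --- in the paper it is exactly the zero self-linking of parallel fibers of $B^3_i \times S^2$ in the product trivialization --- so your identification lemma would have to deliver it too. Nothing here is wrong, and the missing lemma is provable; but note that the paper's Alexander-duality device was chosen precisely so that this step disappears, and that the linking numbers in the theorem's statement already presuppose the Section~\ref{section3} embedding of $E^5$ into $S^5$ that your abstract computation postpones to the end.
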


Note that $\left(\mathrm{lk}(S_i^2,S_j^2)\right)_{1 \leq i,j \leq k}$
is a $k \times k$ skew-symmetric integer matrix.

The rest of this section is devoted to the proof
of the above theorem.

For a fiber bundle (\ref{eq:bundle}), let $F$
be its fiber. We fix the trivialization of the boundary
fibration, and we write
$\partial E^5 = \cup^k_{i=0} (K_i \times S^2)$, where
$K_i \cong S^2$ are the boundary components of $F \cong
S^3_{(k+1)}$ and are
oriented in such a way that the cycle represented by $K_0$ 
is homologous to the sum of the cycles represented by $K_i$, 
$i=1, 2, \ldots, k$. Let $X^5 = 
E^5 \cup (\cup^k_{i=0} (K_i \times B^3))$
be the closed $5$-dimensional manifold obtained by
gluing $E^5$ and $\cup^k_{i=0} (K_i \times B^3)$ along their
boundaries
in such a way that the natural projection
$$\cup^k_{i=0} (K_i \times (B^3 \setminus \{0\}))
\to S^2$$
extends to a smooth fibration
$X^5 \setminus K \to S^2$, where
$K = \cup^k_{i=0} (K_i \times \{0\})$.
Note that in this notation, $K_i$ is identified with
$\partial B^3_i$, $i = 1, 2, \ldots, k$,
and $K_0$ is identified with $\ast \times S^2
\subset \partial B^3_0 \times S^2$.
We warn the reader that the way that $K_i \times B^3$
are attached to $E^5$ is very different from
that for the construction of $Y \subset S^5$ in (\ref{eq:tilde-psi}).

The theorem is a consequence of Lemmas~\ref{lem: 0} and \ref{lem: 1}
below.

\begin{lemma} \label{lem: 0}
The fiber bundle $\pi$ \textup{(\ref{eq:bundle})} 
arises from an NS-pair if and only if
$X^5$ is homotopy equivalent to $S^5$.
\end{lemma}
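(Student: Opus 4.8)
The plan is to establish the two implications separately. The forward implication (an NS-pair yields $X^5 \simeq S^5$) is essentially formal, being built into the definition of $X^5$, while the reverse implication rests on upgrading a homotopy equivalence to a diffeomorphism through the classification of homotopy $5$-spheres.

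First I would treat the forward direction. Suppose the bundle $\pi$ of \textup{(\ref{eq:bundle})} arises from an NS-pair $(S^5, K)$ with $K \cong \cup^{k+1} S^2$, so that $E^5$ is identified, as a bundle over $S^2$, with the complement $S^5 \setminus \Int N(K)$ together with its associated fibration. By the definition of an NS-pair, the trivialization $c$ splits the tubular neighborhood as $N(K) = \cup_{i=0}^k (K_i \times B^3)$, and the global fibration $S^5 \setminus K \to S^2$ restricts on $N(K) \setminus K$ to the angular map $(x, y) \mapsto y/\|y\|$. But this is exactly the gluing prescribed in the construction of $X^5$: filling each boundary component of $E^5$ back in so that the natural projection on $\cup_{i=0}^k (K_i \times (B^3 \setminus \{0\}))$ extends the fibration simply reconstitutes the ambient sphere. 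Hence $X^5 \cong S^5$, and a fortiori $X^5 \simeq S^5$.

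For the converse I would assume $X^5 \simeq S^5$. By construction $X^5$ is a closed smooth $5$-manifold, and being homotopy equivalent to $S^5$ it is a homotopy $5$-sphere, in particular simply connected with the homology of $S^5$. By Smale's solution to the generalized Poincar\'e conjecture (the $h$-cobordism theorem, applicable since $\dim X^5 = 5$), $X^5$ is homeomorphic to $S^5$; and since the Kervaire--Milnor group $\Theta_5$ of homotopy $5$-spheres is trivial, $X^5$ is in fact diffeomorphic to $S^5$. Transporting this diffeomorphism, the submanifold $K = \cup_{i=0}^k (K_i \times \{0\}) \cong \cup^{k+1} S^2$ sits in $S^5$ with trivial normal bundle $\cup_{i=0}^k (K_i \times B^3)$, and the extended fibration $X^5 \setminus K \to S^2$ restricts near $K$ to the angular map $(x,y) \mapsto y/\|y\|$. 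This is precisely the data of an NS-pair $(S^5, K)$, whose associated complement fibration is, by construction, isomorphic to $\pi$; so $\pi$ arises from an NS-pair.

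The main obstacle is this last passage from homotopy type to diffeomorphism type in the reverse direction: one needs not merely that $X^5$ is homeomorphic to $S^5$ but that it is diffeomorphic to it, and this is exactly where the vanishing of $\Theta_5$ is indispensable---in a dimension admitting exotic spheres the argument would break down. A secondary point that must be checked with care is that the reconstruction of the ambient manifold respects orientations and the fixed boundary trivialization, so that the NS-pair one reads off genuinely induces the original bundle $\pi$ and not a twisted version of it.
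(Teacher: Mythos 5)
Your proof is correct and takes essentially the same route as the paper, which disposes of this lemma in a single line by citing the fact that every homotopy $5$-sphere is standard; your two directions simply make explicit what that citation leaves implicit (regluing the $K_i \times B^3$ reconstitutes $S^5$ in the forward direction, and a homotopy equivalence $X^5 \simeq S^5$ upgrades to a diffeomorphism in the reverse one). One cosmetic caveat: the $h$-cobordism theorem does not apply directly to a $5$-dimensional cobordism, so the diffeomorphism $X^5 \cong S^5$ should rest on Smale's dimension-$5$ result together with the vanishing of the group $\Theta_5$ of homotopy $5$-spheres, which is in substance what you invoke.
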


The above lemma is a consequence of the well-known
fact that every homotopy $5$-sphere is standard \cite{Smale}.

Since we see easily that $X^5$ is simply connected, 
it suffices to study the homology group $H_2(X^5)$.
Now consider the following piece of the Mayer-Vietoris 
exact sequence:
$$H_2 (\cup^k_{i=0} (K_i \times \partial B^3)) 
\stackrel{\rho}{\to} H_2(E^5) \oplus H_2(\cup^k_{i=0} (K_i \times B^3)) 
\to H_2(X^5)\to 0,$$
where the homomorphism $\rho = (i_{1*}, -i_{2*})$ is induced by 
the inclusions $i_{1}: \cup^k_{i=0} (K_i \times \partial B^3) \to E^5$ 
and $i_{2}: \cup^k_{i=0} (K_i \times \partial B^3) \to 
\cup^k_{i=0} (K_i \times B^3)$.

Figure~\ref{fig: S0completa} helps us to understand the images 
of the elements of $H_2(\cup^k_{i=0} (K_i \times \partial B^3))$ 
by the homomorphism $\rho$. Note that this depicts the
situation in $S^5$ and not in $X^5$.



\begin{figure}[t]
\psfrag{S0}{$S_0^2$}
\psfrag{S1}{$S_1^2$}
\psfrag{S2}{$S_2^2$}
\psfrag{Sk}{$S_k^2$}
\psfrag{K0}{$K_0$}
\psfrag{K1}{$K_1$}
\psfrag{K2}{$K_2$}
\psfrag{Kk}{$K_k$}
\psfrag{m0}{$\mu_0$}
\psfrag{m1}{$\mu_1$}
\psfrag{m2}{$\mu_2$}
\psfrag{mk}{$\mu_k$}
\includegraphics[width=0.95\linewidth,height=0.4\textheight,
keepaspectratio]{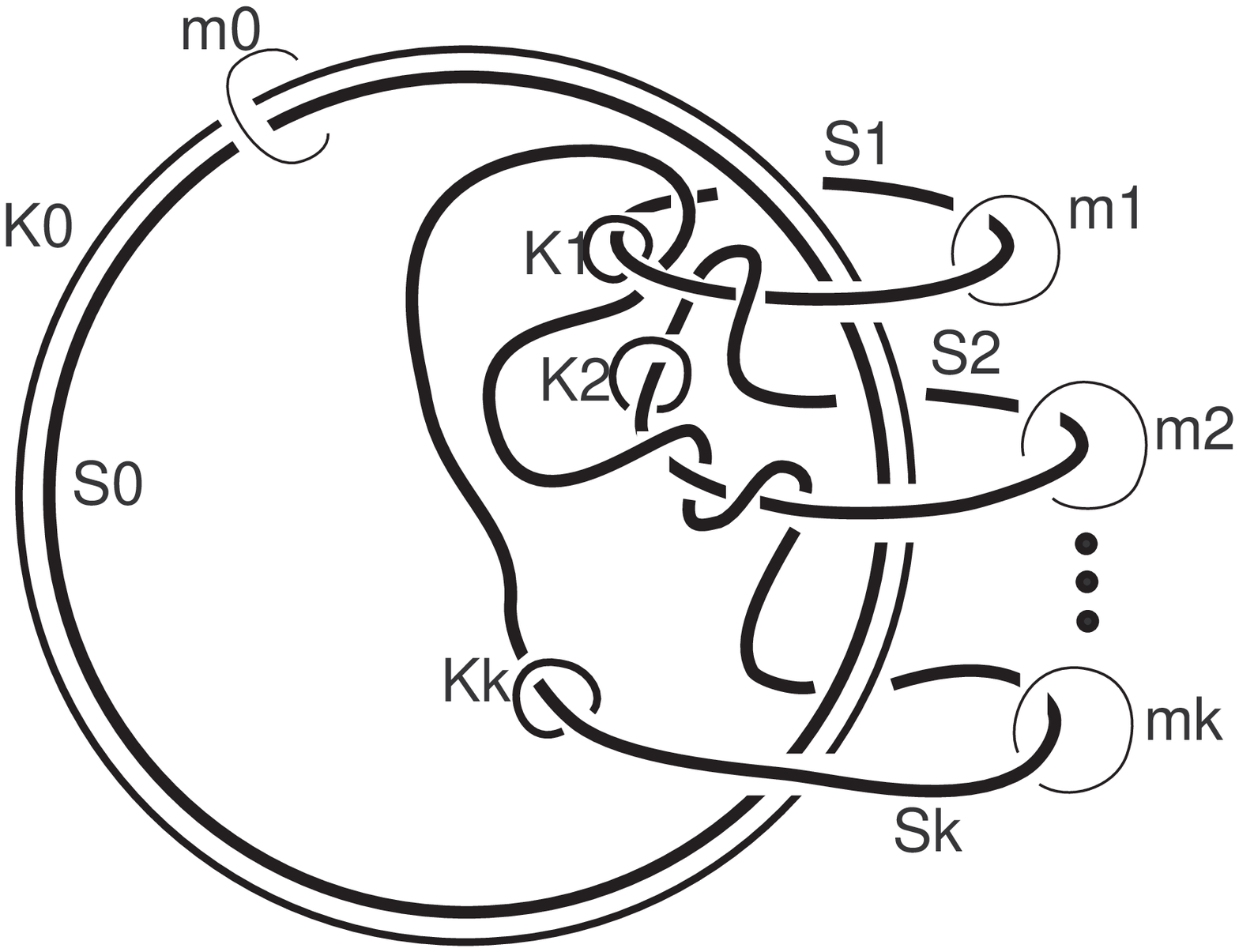}
\caption{Situation in $S^5$}
\label{fig: S0completa}
\end{figure}

In order to describe the homomorphism $\rho$,
let us fix bases of the homology groups. 
In the following, for a cycle $z$, we denote
by $[z]$ the homology class represented by $z$.
First, we have
$$H_2(\cup^k_{i=0} (K_i \times \partial B^3)) \cong 
\oplus^k_{i=0} H_2(K_i \times \partial B^3)$$
and each $H_2(K_i \times \partial B^3) \cong \Z \oplus \Z$
is generated by $[K_i \times \ast]$ and
$[y_i \times \partial B^3]$, where $y_i \in K_i$, 
$i = 0, 1, \ldots, k$. Furthermore, we have
$$H_2(\cup^k_{i=0} (K_i \times B^3)) \cong 
\oplus^k_{i=0} H_2(K_i \times B^3)$$
and each $H_2(K_i \times B^3) \cong \Z$
is generated by $\delta_i  =
[K_i \times \ast]$, $\ast \in B^3$, $i = 0, 1, \ldots, k$.
On the other hand, we have
$$E^5 = S^5 \setminus (\cup^k_{i=0} \Int N(S^2_i)),$$
where $N(S^2_i) = B^3_i \times S^2$ is the closed tubular
neighborhood of $S^2_i$ in $S^5$,
and $S^2_i = x_i \times S^2$, $i = 1, 2, \ldots, k$, are 
so-called ``Hopf duals'' to $S^2_0$.
Therefore, by Alexander duality we have
$$H_2(E^5) \cong H^2 (\cup^k_{i=0} \Int N(S^2_i)).$$
Since $N(S^2_i) = B^3_i \times S^2$, we can take the
generators $\mu_i = [\partial B^3_i \times \ast]
\in H_2(E^5)$, $\ast \in S^2$, and $H_2(E^5)$
is freely generated by $\mu_i$, $i = 0, 1, \ldots, k$.
Here, we orient $\mu_i$ in such a way that the
linking number of $\mu_i$ with $S^2_i$ is equal to $+1$.
Observe that $\mu_i = i_{1*}([K_i \times \ast])$ for 
$i=1, 2, \ldots, k$, and 
$\delta_i = i_{2*}([K_i \times \ast])$ 
for $i=0, 1, \ldots, k$. Therefore, the images of the generators 
by the homomorphism $\rho$ can be written as follows.
\begin{eqnarray*}
\rho([K_0 \times \ast]) & = & \mu_1 + \mu_2 + \cdots + 
\mu_k - \delta_0, \\
\rho([K_i \times \ast]) & = & \mu_i - \delta_i \quad (1\leq i \leq k), \\
\rho([y_0 \times \partial B^3]) & = & \mu_0 + 0, \\
\rho([y_i \times \partial B^3]) & = & 
\sum_{0 \leq j \leq k, j \neq i} \mathrm{lk}(S^2_i, S^2_j)\mu_j + 0 
\quad (1 \leq i \leq k).
\end{eqnarray*}

Therefore, with respect to the above bases,
the homomorphism $\rho$ is represented by the following matrix:
\begin{equation*}
R = \left(
\begin{array}{ccccccccl}
0 & 0 & \cdots & 0 &  & 1 & 1 & \cdots & 1 \\
1 & 1 & \cdots & 0 & & 0 & a_{11} & \cdots & a_{1k} \\
\vdots & \vdots & \ddots & \vdots &  & \vdots &
\vdots & \ddots & \vdots \\
1 & 0 & \cdots & 1 &  & 0 & a_{k1} & \cdots & a_{kk}\\
 &  &  &  &  & &  &  & \\
-1 & 0 & \cdots & 0 &  & 0 & 0 & \cdots & 0\\
0 & -1 & \cdots & 0 &  & 0 & 0 & \cdots & 0\\
\vdots & \vdots & \ddots & \vdots &  & \vdots & \vdots & \ddots & \vdots\\
0 & 0 & \cdots & -1 &  & 0 & 0 & \cdots & 0 \\
\end{array}
\right), 
\end{equation*}
where $a_{ij} = \mathrm{lk}(S^2_j , S^2_i)$, $i \neq j$, 
$1 \leq i,j\leq k$ and $a_{ii}=0$. Observe that $a_{ij}=-a_{ji}$.

\begin{lemma} \label{lem: 1} 
The $5$-dimensional manifold $X^5$ is homotopy
equivalent to $S^5$ if and only if $\mathrm{det}\,R =\pm 1$.
\end{lemma}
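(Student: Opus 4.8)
The plan is to translate the homotopy-theoretic statement into a homological one and then into the single determinantal condition on $R$. Since $X^5$ is a simply connected closed orientable $5$-manifold, the first step is to show that $X^5 \simeq S^5$ if and only if $H_2(X^5) = 0$. This follows from Poincar\'e duality together with the universal coefficient theorem: simple connectivity gives $H_1(X^5) = 0$ and hence $H_4(X^5) \cong H^1(X^5) = 0$, while $H_3(X^5) \cong H^2(X^5) \cong \Hom(H_2(X^5), \Z)$, so that the vanishing of $H_2(X^5)$ automatically forces $H_3(X^5) = 0$. Thus $X^5$ has the reduced homology of $S^5$ exactly when $H_2(X^5) = 0$, and then the Hurewicz and Whitehead theorems upgrade this to a homotopy equivalence $X^5 \simeq S^5$.

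The second step is to identify $H_2(X^5)$ with $\Coker R$. From the Mayer--Vietoris sequence displayed above, $H_2(X^5)$ is the cokernel of $\rho$ provided that the map $H_2(E^5) \oplus H_2(\cup^k_{i=0}(K_i \times B^3)) \to H_2(X^5)$ is surjective; this surjectivity holds because the separating submanifold $\cup^k_{i=0}(K_i \times \partial B^3) \cong \cup^k_{i=0}(S^2 \times S^2)$ has $H_1 = 0$, so the connecting homomorphism $H_2(X^5) \to H_1(\cup^k_{i=0}(K_i \times \partial B^3))$ vanishes. With the chosen bases, $\rho$ is represented by $R$, so $H_2(X^5) \cong \Coker R$.

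The third step is the algebraic conclusion. The domain of $\rho$ is free abelian of rank $2(k+1)$, generated by the classes $[K_i \times \ast]$ and $[y_i \times \partial B^3]$ for $0 \leq i \leq k$, and the codomain is likewise free of rank $2(k+1)$, generated by $\mu_0, \ldots, \mu_k, \delta_0, \ldots, \delta_k$. Hence $R$ is a \emph{square} integer matrix, and by the Smith normal form its cokernel vanishes precisely when $|\det R| = 1$, that is, when $\det R = \pm 1$. Stringing the three equivalences together yields the lemma.

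The point that most deserves care is that the criterion is \emph{unimodularity} of $R$ and not merely $\det R \neq 0$: a nonzero but non-unimodular determinant would leave $\Coker R$ a nontrivial finite group, so $X^5$ would fail even to be a homology sphere. It is precisely the demand that $H_2(X^5)$ vanish outright, rather than be finite, that converts the condition ``$\det R \neq 0$'' into ``$\det R = \pm 1$.'' The other spot requiring attention is the surjectivity in the Mayer--Vietoris step, which rests on the vanishing of $H_1$ of the separating copies of $S^2 \times S^2$.
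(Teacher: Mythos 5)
Your proposal is correct and takes essentially the same route as the paper: both identify $H_2(X^5)$ with the cokernel of $\rho$ via the Mayer--Vietoris sequence, note that $R$ is a square integer matrix so that the cokernel vanishes exactly when $\mathrm{det}\,R=\pm 1$, and then use Poincar\'e duality together with a standard Hurewicz--Whitehead argument to pass between $H_2(X^5)=0$ and $X^5 \simeq S^5$. You merely make explicit the steps the paper leaves implicit (the vanishing connecting homomorphism justifying surjectivity onto $H_2(X^5)$, the universal coefficient computation killing $H_3$ and $H_4$, and the Smith normal form behind the unimodularity criterion).
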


\begin{proof}
If $X^5$ is homotopy equivalent to $S^5$, then
its second homology group must vanish and
therefore the homomorphism $\rho$ must
be an epimorphism, which implies that $\mathrm{det}\,R =\pm 1$.

Conversely, if $\rho$ is an isomorphism, by the
above Mayer-Vietoris exact sequence, we have
$H_2(X^5) = 0$. Then by Poincar\'e duality,
we see that $X^5$ has the homology of $S^5$.
Then, a standard argument in
algebraic topology shows that $X^5$ 
is homotopy equivalent to $S^5$.
%

This completes the proof of Lemma~\ref{lem: 1},
and hence Theorem~\ref{main-theorem} has been proved. 
\end{proof}

Since a skew-symmetric integer matrix has determinant
$\pm 1$ only if its size is even, we have
the following.

\begin{corollary}
If the fiber bundle $S^3_{(k+1)}\hookrightarrow E^5 \to S^2$ 
as in \textup{(\ref{eq:bundle})} arises from an NS-pair, 
then $k$ must be even.
\end{corollary}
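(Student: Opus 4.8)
The plan is to deduce this corollary immediately from Theorem~\ref{main-theorem}, reducing the geometric statement to a one-line fact in linear algebra. By Theorem~\ref{main-theorem}, the bundle in \textup{(\ref{eq:bundle})} arises from an NS-pair if and only if $\det A = \pm 1$, where $A = \left(\mathrm{lk}(S_i^2, S_j^2)\right)_{1 \leq i,j \leq k}$. As already recorded right after the statement of that theorem (and as follows from the relations $a_{ij} = -a_{ji}$, $a_{ii} = 0$ computed in the proof of Lemma~\ref{lem: 1}), this $k \times k$ integer matrix $A$ is skew-symmetric. So the hypothesis ``arises from an NS-pair'' supplies the single piece of information $\det A = \pm 1$, and in particular $\det A \neq 0$.

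First I would invoke the elementary observation that a skew-symmetric matrix of odd size is singular. Concretely, from $\transp A = -A$ one gets
$$\det A = \det \transp A = \det(-A) = (-1)^k \det A.$$
If $k$ were odd, this would read $\det A = -\det A$, forcing $2 \det A = 0$ and hence $\det A = 0$. This contradicts $\det A = \pm 1$. Therefore $k$ must be even, which is exactly the assertion of the corollary.

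There is essentially no hard step here: the entire content of the corollary is contained in Theorem~\ref{main-theorem}, and the passage from ``$\det$ of a skew-symmetric matrix equals $\pm 1$'' to ``the size is even'' is standard. The only point meriting a moment's care is to confirm that the matrix in question really is skew-symmetric over $\Z$ rather than merely antisymmetric up to sign of the chosen orientations; but this is guaranteed by the linking-number identity $\mathrm{lk}(S_i^2, S_j^2) = -\mathrm{lk}(S_j^2, S_i^2)$ for the fixed orientations, together with the convention $\mathrm{lk}(S_i^2, S_i^2) = 0$ on the diagonal. With that in hand the argument is complete.
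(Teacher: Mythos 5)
Your proof is correct and is essentially identical to the paper's: the paper justifies the corollary in one line by the standard fact that a skew-symmetric integer matrix can have determinant $\pm 1$ only if its size is even, which is exactly the $\det A = (-1)^k \det A$ computation you spell out. No gaps; your version merely makes the elementary linear-algebra step explicit.
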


\begin{remark} \label{lobs}
In \cite{Lo} Looijenga showed how to use the connected sum of NS-pairs 
to construct new ones. In fact, he proved that given 
an NS-pair $(S^n, K^{n-p-1})$ with fiber $F$, 
there exists a polynomial map germ $f: (\mathbb{R}^{n+1},0) 
\to (\mathbb{R}^{p+1},0)$ 
with an isolated singularity at the origin such that
the associated NS-pair is isomorphic to the connected sum 
$$(S^n, K^{n-p-1}) \sharp ((-1)^{n-1}S^n, (-1)^{n-p}K^{n-p-1})$$ 
with fiber being diffeomorphic to the interior of 
$\overline{F} \natural (-1)^{n-p}\overline{F}$, where 
``$\natural$'' means the connected sum along the boundary. 
For further details the reader is referred to \cite[p.~421]{Lo}.
\end{remark}

The following proposition follows from the remark above and the previous result.

\begin{proposition}\label{non-trivial}
For every even integer $k \geq 0$, there exists
an NS-pair $(S^5, L_{k+1})$ with $L_{k+1}$ being
diffeomorphic to the disjoint union of $k+1$ copies of $S^2$, 
and there exists a polynomial map germ $f:(\mathbb{R}^6, 0) \to (\mathbb{R}^3,0)$
with an isolated critical point at $0$ such that the 
associated NS-pair is isomorphic to
$(S^5, L_{k+1} \sharp (-L_{k+1}))$. 
In particular, $L_{k+1} \sharp (-L_{k+1})$ consists of $2k+1$
connected components.
\end{proposition}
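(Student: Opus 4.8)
The plan is to construct the NS-pair by hand using the classification of Theorem~\ref{main-theorem}, and then to obtain the polynomial map germ by applying Looijenga's construction recalled in Remark~\ref{lobs}. For the first part, I would exhibit, for each even integer $k \geq 0$, a $k \times k$ skew-symmetric integer matrix with determinant $\pm 1$. The obvious choice is the block-diagonal symplectic matrix made of $k/2$ copies of $\left(\begin{smallmatrix} 0 & 1 \\ -1 & 0\end{smallmatrix}\right)$, which has determinant $+1$; for $k = 0$ the empty $0 \times 0$ matrix has determinant $1$ by convention. Its above-diagonal entries form an element of $\Z^{k(k-1)/2}$, which by Lemma~\ref{lemma:cl} and the linking-number interpretation recalled after it is realized as $(\mathrm{lk}(S^2_i, S^2_j))_{1 \leq i < j \leq k}$ by a fiber bundle $S^3_{(k+1)} \hookrightarrow E^5 \to S^2$ as in (\ref{eq:bundle}). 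Since the determinant equals $\pm 1$, Theorem~\ref{main-theorem} guarantees that this bundle arises from an NS-pair $(S^5, L_{k+1})$. As the fiber $F \cong S^3_{(k+1)}$ has boundary $\partial F \cong \cup^{k+1} S^2$, the link $L_{k+1}$ is diffeomorphic to the disjoint union of $k+1$ copies of $S^2$, giving the first assertion.

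For the second part, I would feed $(S^5, L_{k+1})$ into the construction of Remark~\ref{lobs}. Here the relevant parameters are $n = 5$ and $p = 2$, since the link $L_{k+1}$ has dimension $n - p - 1 = 2$. Remark~\ref{lobs} then produces a polynomial map germ $f : (\R^6, 0) \to (\R^3, 0)$ with an isolated singularity at the origin whose associated NS-pair is isomorphic to
$$(S^5, L_{k+1}) \sharp \bigl((-1)^{n-1} S^5, (-1)^{n-p} L_{k+1}\bigr).$$
Plugging in $n = 5$ and $p = 2$ gives $(-1)^{n-1} = +1$ and $(-1)^{n-p} = -1$, so the right-hand side is precisely $(S^5, L_{k+1} \sharp (-L_{k+1}))$, as required.

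It remains to count the connected components. Following Remark~\ref{lobs}, the fiber of the connected-sum NS-pair is the interior of $\overline{F} \natural (-\overline{F})$, and the boundary connected sum $\natural$ merges exactly one boundary $2$-sphere of $\overline{F}$ with one boundary $2$-sphere of $-\overline{F}$ (their connected sum being again an $S^2$), leaving all other boundary components untouched. Hence the link $L_{k+1} \sharp (-L_{k+1})$ has $(k+1) + (k+1) - 1 = 2k+1$ components.

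I do not anticipate a genuine obstacle, since the substantive work is already encapsulated in Theorem~\ref{main-theorem} and in Looijenga's construction. The only points demanding care are verifying that the chosen matrix is simultaneously skew-symmetric and unimodular of even size---immediate for the symplectic block form, and impossible in odd size---and tracking the signs $(-1)^{n-1}$ and $(-1)^{n-p}$ so that the orientation-reversed copy $-L_{k+1}$ enters the connected sum correctly.
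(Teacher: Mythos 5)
Your proposal is correct and follows essentially the same route as the paper's own proof: exhibit the block-diagonal matrix built from copies of $\left(\begin{smallmatrix} 0 & 1 \\ -1 & 0\end{smallmatrix}\right)$, invoke Theorem~\ref{main-theorem} (via Lemma~\ref{lemma:cl}) to realize it by an NS-pair $(S^5, L_{k+1})$, and then apply Looijenga's construction from Remark~\ref{lobs}. Your extra care with the signs $(-1)^{n-1}$, $(-1)^{n-p}$ and the boundary-component count $2(k+1)-1 = 2k+1$ simply makes explicit what the paper leaves implicit.
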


\begin{proof} 
First note that for each positive even integer
$k$, there exists a skew-symmetric integer matrix
of determinant $\pm 1$. (For example,
consider the direct sum of the matrix
$$
\begin{pmatrix} 
0 & 1 \\ -1 & 0
\end{pmatrix}
$$
and its copies.)
Then, by the above argument,
there exists an NS-pair $(S^5, L_{k+1})$ corresponding
to that matrix. Now, one can just apply Looijenga's 
construction explained above in Remark~\ref{lobs}. 
\end{proof}

\begin{corollary}
Given a real polynomial map germ as in
Proposition~\textup{\ref{non-trivial}} with $k > 0$, 
the fiber of the associated Milnor fibration is not 
diffeomorphic to a disk.
\end{corollary}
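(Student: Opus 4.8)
The plan is to reduce the statement to the characterization of triviality already recorded in Theorem~\ref{teorema 1}(a), since the germ in question lives in the dimension pair $(n,p) = (6,3)$, i.e.\ $n - p = 3$. Recall that, by Milnor's definition quoted in the introduction, the germ $f$ is \emph{trivial} precisely when the fiber $F_f$ of the bundle of Theorem~\ref{TeoMilnor} is diffeomorphic to the closed disk $D^{n-p} = D^3$. Hence proving that the Milnor fiber is \emph{not} a disk is the same as proving that $f$ is \emph{not} trivial, and that is what I would establish.

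First I would identify the link of the germ $f$ supplied by Proposition~\ref{non-trivial}. By construction its associated NS-pair is $(S^5, L_{k+1} \sharp (-L_{k+1}))$, so the link $K = f^{-1}(0) \cap S^5_\varepsilon$ is diffeomorphic to $L_{k+1} \sharp (-L_{k+1})$, which the proposition records as having exactly $2k+1$ connected components. Next I would simply count: since $k$ is a positive even integer we have $k \geq 2$, whence $2k+1 \geq 5 > 1$, so the link $K$ is disconnected.

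It then remains only to invoke the equivalence of (i) and (iii) in Theorem~\ref{teorema 1}(a) for the pair $(6,3)$, namely that $f$ is trivial if and only if $K$ is connected. A disconnected link therefore forces $f$ to be non-trivial, and translating this back through the definition of triviality says exactly that $F_f$ is not diffeomorphic to $D^3$, which is the assertion.

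I do not expect any genuine obstacle here: the argument is a one-line deduction from the classification already in hand. The only point demanding a moment of care is the bookkeeping—confirming that ``the fiber of the associated Milnor fibration'' in the corollary is the same compact manifold $F_f$ of Theorem~\ref{TeoMilnor} whose diffeomorphism type defines triviality, and that the link $K$ appearing in Theorem~\ref{teorema 1} is precisely $\partial F_f$, the binding of the NS-pair produced in Proposition~\ref{non-trivial}. Both identifications are immediate from the setup, so once the component count of $L_{k+1} \sharp (-L_{k+1})$ is noted, the conclusion follows at once.
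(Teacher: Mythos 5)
Your argument is correct: the germ of Proposition~\ref{non-trivial} lives in the dimension pair $(6,3)$, its link $L_{k+1}\sharp(-L_{k+1})$ has $2k+1\geq 5$ components since $k$ is a positive even integer, and the equivalence of (i) and (iii) in Theorem~\ref{teorema 1}(a) then rules out triviality, i.e.\ $F_f\not\cong D^3$. The paper, however, does not route through Theorem~\ref{teorema 1} at all; it treats the corollary as immediate from Proposition~\ref{non-trivial}, because the Milnor fiber $F_f$ of Theorem~\ref{TeoMilnor} is a compact $3$-manifold bounded by a copy of the link (by the construction of Sections~\ref{section2}--\ref{section3} it is in fact $S^3_{(2k+1)}$), whereas $\partial D^3=S^2$ is connected, so a link with more than one component already forbids $F_f\cong D^3$. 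Observe that the only direction of the cited equivalence your proof uses --- ``trivial implies connected link'' in contrapositive form --- is precisely this boundary count, since triviality means $F_f\cong D^3$ and hence $K\cong\partial F_f\cong S^2$; the substantial content of Theorem~\ref{teorema 1}(a), namely that a \emph{connected} link forces triviality, plays no role. So your route is sound but imports the quoted result of \cite{SHMA} where a one-line observation suffices; on the other hand, your write-up has the small merit of making explicit the bookkeeping identifications (that the fiber in the corollary is the $F_f$ whose diffeomorphism type defines triviality, and that the link of the germ is $\partial F_f\cong L_{k+1}\sharp(-L_{k+1})$) that the paper leaves tacit.
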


This answers to Milnor's non-triviality question,
Problem~\ref{problem},
for the dimension pair $(6, 3)$.

\section{A generalization to higher dimensions}\label{section4}

We can generalize the construction of Section~\ref{section3}
in higher dimensions as follows, in order to obtain
new non-trivial examples of real polynomial map
germs with an isolated singularity.

Let $n \geq 3$ be an integer.
For a non-negative integer $k$, let
$S^n_{(k+1)}$ denote the $n$-dimensional
sphere $S^n$ with the interior of the disjoint
union of $k+1$ copies of the $n$-dimensional disks removed.
In this section, we will construct a smooth fiber bundle
$$S^n_{(k+1)} \hookrightarrow E^{2n-1} \stackrel{\pi}{\to} S^{n-1}$$
such that the restriction to the boundary
$$\partial S^n_{(k+1)} \hookrightarrow \partial E^{2n-1} 
\stackrel{\pi}{\to} S^{n-1}$$
is a trivial bundle and that it arises from
an NS-pair $(S^{2n-1}, K^{n-1})$.

Let $A = (a_{ij})$ be a $k \times k$ integer matrix
which is $(-1)^n$-symmetric such that the
diagonal entries all vanish.
Let $S_0 \cong S^{n-1}$ be a trivially embedded oriented
$(n-1)$-sphere in $S^{2n-1}$. Then, there exist
mutually disjoint
smoothly embedded oriented $(n-1)$-spheres
$S_i$ in $S^{2n-1}$, $i = 1, 2, \ldots, k$, 
such that
\begin{itemize}
\item[$(1)$] $S_i$ do not intersect $S_0$,
\item[$(2)$] $S_i$ have linking number $+1$ with $S_0$,
\item[$(3)$] the linking number $\mathrm{lk}(S_i, S_j)
= a_{ij}$, $i \neq j$.
\end{itemize}
Such embeddings do exist (for example,
see \cite{H3}).
Note that then
$$E^{2n-1} = S^{2n-1} \setminus \cup_{i=0}^k
\Int N(S_i)$$
naturally fibers over $S^{n-1}$ in such a way that
the restriction to the boundary is trivial.
(More precisely, consider 
the associated sub-fibration of the
trivial fiber bundle $S^{2n-1} \setminus \Int N(S_0)
\cong B^n \times S^{n-1} \to S^{n-1}$.)
Then, by the same construction as in Section~\ref{section3},
we obtain an object $(X^{2n-1}, K^{n-1})$, where
$X^{2n-1}$ is a $(2n-1)$-dimensional smooth closed manifold,
$X^{2n-1} \setminus \Int{N(K^{n-1})}$ is diffeomorphic to $E^{2n-1}$, and it
fibers over $S^{n-1}$ with fiber $S^n_{(k+1)}$ in such a way
that the projection map is compatible with a trivialization of 
the closed tubular neighborhood $N(K^{n-1})$.
Then, we have the following.

\begin{lemma}
The manifold $X^{2n-1}$ is a homotopy $(2n-1)$-sphere if and only if
$\mathrm{det}\,A = \pm 1$.
\end{lemma}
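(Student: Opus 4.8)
The plan is to mirror the proof of Lemma~\ref{lem: 1}, the only new ingredient being that when $2n-1=5$ simple connectivity was immediate, whereas here I must establish that $X^{2n-1}$ is $(n-2)$-connected. Granting this, the argument reduces to a single homological computation: $X^{2n-1}$ will be a homotopy sphere precisely when it is a homology sphere, and Poincaré duality will propagate the vanishing of $\tilde H_q$ from the range $q\leq n-2$ and from the critical degree $n-1$ to all intermediate degrees. So the core is to compute $H_{n-1}(X^{2n-1})$ as a Mayer--Vietoris cokernel and identify its defining matrix with the one of Section~\ref{section3}.

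First I would establish connectivity. Writing $E^{2n-1}\simeq S^{2n-1}\setminus\cup_{i=0}^k S_i$, each $S_i\cong S^{n-1}$ has codimension $n\geq 3$, so removing them leaves $\pi_1$ unchanged and $E^{2n-1}$ is simply connected. By Alexander duality $\tilde H_q(E^{2n-1})\cong \tilde H^{2n-2-q}(\cup_{i=0}^k S^{n-1})$, which vanishes for $q\leq n-2$, with $\tilde H_{n-1}(E^{2n-1})\cong\Z^{k+1}$ generated by the meridian spheres $\mu_0,\dots,\mu_k$; hence by Hurewicz $E^{2n-1}$ is $(n-2)$-connected. The gluing pieces $\cup(K_i\times B^n)\simeq\cup S^{n-1}$ and the intersection $\cup(K_i\times S^{n-1})\cong\cup(S^{n-1}\times S^{n-1})$ are likewise $(n-2)$-connected, the latter because $H_p(S^{n-1})=0$ for $0<p<n-1$. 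A van Kampen argument then gives $\pi_1(X^{2n-1})=0$, and the Mayer--Vietoris sequence forces $\tilde H_q(X^{2n-1})=0$ for $q\leq n-2$.

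Next I would compute the critical group. Since $\tilde H_{n-2}$ of the intersection vanishes, Mayer--Vietoris yields
$$H_{n-1}\Big(\cup_{i=0}^k(K_i\times S^{n-1})\Big)\xrightarrow{\ \rho\ } H_{n-1}(E^{2n-1})\oplus H_{n-1}\Big(\cup_{i=0}^k(K_i\times B^n)\Big)\to H_{n-1}(X^{2n-1})\to 0,$$
so $H_{n-1}(X^{2n-1})\cong\Coker\rho$. By Künneth the source is $\Z^{2(k+1)}$, generated by the classes $[K_i\times\ast]$ and $[\ast\times\partial B^n]$, and the target is $\Z^{2(k+1)}$, generated by $\mu_0,\dots,\mu_k$ and $\delta_0,\dots,\delta_k$. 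The generators map exactly as in Section~\ref{section3}; in particular $[\ast\times\partial B^n]$ over the $i$-th factor maps to $\sum_{j\neq i}\mathrm{lk}(S_i,S_j)\mu_j$, the higher-dimensional linking computation parallel to the one done there. Thus $\rho$ is represented by the same block matrix $R$, now with off-diagonal block the linking matrix $A$. As source and target are free of equal rank, $\Coker\rho=0$ iff $\det R=\pm1$; a short reduction of $R$ (using the $-I_k$ block from the $\delta_i$ to clear the $[K_i\times\ast]$ columns and then expanding) gives $\det R=\pm\det M$, where $M$ has off-diagonal entries the linking numbers, so $\lvert\det M\rvert=\lvert\det A\rvert$ because $A$ is $(-1)^n$-symmetric. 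Hence $H_{n-1}(X^{2n-1})=0$ iff $\det A=\pm1$.

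To conclude, when $\det A=\pm1$ the manifold $X^{2n-1}$ is simply connected, orientable, and has $\tilde H_q=0$ for $q\leq n-2$ and $q=n-1$; Poincaré duality then kills $H_q$ for all $0<q<2n-1$, so $X^{2n-1}$ is a homology sphere and, being simply connected, a homotopy sphere by Hurewicz and Whitehead. The converse is immediate, since a homotopy sphere has $H_{n-1}=0$ and hence $\det A=\pm1$. I expect the main obstacle to be precisely the connectivity step: unlike the case $2n-1=5$, one must verify that $E^{2n-1}$ is genuinely $(n-2)$-connected—where the Alexander-duality computation and the exact degrees carrying homology of the gluing data must be tracked carefully—and that this connectivity survives the gluing into $X^{2n-1}$.
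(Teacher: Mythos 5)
Your proof is correct and takes essentially the same approach as the paper: the paper's own (much terser) proof likewise observes that $E^{2n-1}$, and hence $X^{2n-1}$, is $(n-2)$-connected, reduces the statement to the vanishing of $H_{n-1}(X^{2n-1})$, and invokes the Mayer--Vietoris computation with the matrix $R$ from Section~3. The details you supply---Alexander duality and van Kampen for the connectivity, K\"unneth for the generators, and the block reduction giving $\det R=\pm\det A$---are exactly the steps the paper leaves implicit.
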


\begin{proof}
We see easily that $E^{2n-1}$, and hence $X^{2n-1}$
is $(n-2)$-connected. Thus, $X^{2n-1}$ is a homotopy
$(2n-1)$-sphere if and only if $H_{n-1}(X^{2n-1})$ vanishes.
Then, an argument using a Mayer-Vietoris exact sequence
as in the previous section leads to the desired result.
\end{proof}

Combining this with the Looijenga construction (Remark~\ref{lobs}),
we have the following.

\begin{corollary}\label{cor:hdim}
Let $n \geq 3$ be an integer.
For every positive integer $k$ with $k \equiv 1 \pmod 4$, 
there exists an NS-pair $(S^{2n-1}, L_k)$ with $L_k$ being
diffeomorphic to the disjoint union of $k$ copies of $S^{n-1}$, 
and there exists a polynomial map germ $f:(\mathbb{R}^{2n}, 0) \to (\mathbb{R}^n,0)$
with an isolated singularity at $0$ such that the 
associated NS-pair is isomorphic to
$(S^{2n-1}, L_k \sharp (-1)^{n}L_k)$. 
In particular, $L_k \sharp (-1)^{n}L_k$ consists of $2k-1$
connected components.
\end{corollary}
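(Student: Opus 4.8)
The plan is to reduce the whole statement to the construction of the NS-pair $(S^{2n-1}, L_k)$ and then feed it into Looijenga's machine. Granting such an NS-pair, I would apply Remark~\ref{lobs} with $N = 2n-1$ and $P = n-1$, so that $N+1 = 2n$ and $P+1 = n$, and the binding dimension is $N-P-1 = n-1$. Since $(-1)^{N-1} = (-1)^{2n-2} = 1$ and $(-1)^{N-P} = (-1)^{n}$, Looijenga's construction yields a polynomial map germ $f\colon (\R^{2n},0) \to (\R^{n},0)$ with an isolated singularity whose associated NS-pair is $(S^{2n-1}, L_k) \sharp (S^{2n-1}, (-1)^{n}L_k)$, i.e.\ has binding $L_k \sharp (-1)^{n}L_k$. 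The component count is then routine: by Remark~\ref{lobs} the new page is the boundary connected sum of two copies of the page, which merges one boundary component of each, so the binding has $k + k - 1 = 2k-1$ components. Thus everything follows once $(S^{2n-1}, L_k)$ is produced.

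To produce it, I would invoke the construction of this section with a $(k-1)\times(k-1)$ matrix $A$, so that the binding $K^{n-1}$ has $k$ components each diffeomorphic to $S^{n-1}$, and set $L_k = K^{n-1}$. I would take $A$ to be the $(-1)^{n}$-symmetric zero-diagonal matrix obtained as the direct sum of $(k-1)/2$ copies of the hyperbolic block
$$\begin{pmatrix} 0 & 1 \\ (-1)^{n} & 0 \end{pmatrix},$$
which requires $k-1$ to be even; this holds because $k \equiv 1 \pmod 4$. Then $\det A = \pm 1$, so by the lemma above $X^{2n-1}$ is a homotopy $(2n-1)$-sphere.

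The crux is to upgrade \emph{homotopy sphere} to \emph{standard sphere}, since only then is $(S^{2n-1}, L_k)$ genuinely an NS-pair (in dimension $5$ this was automatic, but here exotic spheres intervene). I would realize $X^{2n-1}$ as the boundary of a compact stably parallelizable manifold $W^{2n}$ whose middle intersection form is $A$, either by recognizing the linking configuration $\{S_i\}$ as a plumbing of trivial $D^{n}$-bundles over $S^{n}$ along $A$, or through the mapping-torus coboundary of the open book $E^{2n-1} \to S^{n-1}$. By Kervaire--Milnor, $X^{2n-1}$ then lies in $bP_{2n}$ and its class is detected by the signature $\sigma(W)$ when $n$ is even and by the Kervaire (Arf) invariant when $n$ is odd. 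For $n$ even the form $A$ is hyperbolic, so $\sigma(W)=0$ and $X^{2n-1}$ is standard. For $n$ odd the Kervaire invariant is the Arf invariant of a quadratic refinement of the skew form $A$ coming from the framings of the $S_i$; each hyperbolic block contributes $1$, so the total equals $(k-1)/2 \bmod 2$, which vanishes precisely because $k \equiv 1 \pmod 4$. In either case $X^{2n-1} \cong S^{2n-1}$, so $(S^{2n-1}, L_k)$ is an NS-pair and the proof is complete.

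The main obstacle is this last paragraph, and within it the case $n$ odd. One must (i) exhibit an explicit stably parallelizable coboundary $W$, (ii) verify that its middle intersection form is exactly $A$, and --- most delicately --- (iii) compute the quadratic refinement on each hyperbolic generator from the embedding and framing data of the spheres $S_i$, showing that each block is forced to contribute $1$ to the Arf invariant. It is exactly this computation that pins the invariant to $(k-1)/2 \bmod 2$ and thereby explains why the sharper congruence $k \equiv 1 \pmod 4$, rather than merely $k$ odd, is the right hypothesis.
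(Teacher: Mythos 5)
Your reduction of the statement to the construction of $(S^{2n-1},L_k)$, the application of Remark~\ref{lobs} with signs $(-1)^{2n-2}=+1$ and $(-1)^{(2n-1)-(n-1)}=(-1)^n$, and the count $k+k-1=2k-1$ all match the paper. The genuine gap is in your last paragraph, which is the only hard step: you never actually prove that $X^{2n-1}$ is diffeomorphic, rather than merely homotopy equivalent, to $S^{2n-1}$. Your $bP_{2n}$ outline rests on the assertion that the quadratic refinement takes the value $1$ on each hyperbolic generator, so that the Arf invariant equals $(k-1)/2 \bmod 2$; you flag this yourself as the crux, you do not compute it, and it is dubious as stated. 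The values $q(e_i)$ depend on the normal framings imposed by the regluing (equivalently, on the framings of the $n$-handles in the surgery trace), and with the trivial framings that your own plumbing description invokes (``plumbing of trivial $D^n$-bundles'') one gets $q=0$ on every generator, hence Arf invariant $0$ for \emph{every} odd $k$ --- contradicting your proposed explanation of the hypothesis $k\equiv 1\pmod 4$. For $n$ even the situation is no better: the diagonal self-intersections, and hence your claim that the intersection form of $W$ is ``exactly $A$'' with vanishing signature, again depend on the unexamined framing data; and since your signature argument would need only $k$ odd, your route cannot account for the uniform mod-$4$ hypothesis in any case.

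The paper avoids this entire discussion and never shows $X^{2n-1}$ itself is standard. It works with a \emph{smaller} matrix: setting $\ell=(k-1)/2$, which is even precisely because $k\equiv 1\pmod 4$, it chooses an $\ell\times\ell$ unimodular $(-1)^n$-symmetric matrix with zero diagonal --- evenness of $\ell$ is forced, since such a matrix of odd size reduces mod $2$ to an alternating form and so has even determinant --- and obtains via Haefliger a homotopy sphere $X^{2n-1}$ containing $\ell+1$ trivially embedded $(n-1)$-spheres. Kervaire--Milnor \cite{KM} is then used only through the group structure of homotopy spheres: $X^{2n-1}\sharp(-X^{2n-1})\cong S^{2n-1}$ since $2n-1\geq 5$, and the connected sum of pairs yields the NS-pair $(S^{2n-1},L_k)$ with $2\ell+1=k$ components, after which Looijenga's construction finishes the proof. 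Thus the congruence $k\equiv 1 \pmod 4$ has an elementary arithmetic origin ($k=2\ell+1$ with $\ell$ even), not an Arf-invariant one. To salvage your direct approach you would have to carry out your steps (i)--(iii) in full --- exhibit the parallelizable coboundary, pin down the framings actually imposed by the fibration-compatible regluing, and compute the refinement on each generator --- whereas the mirror-doubling trick renders all of this unnecessary at the mild cost of doubling the number of link components.
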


Note that the associated Milnor fiber is
diffeomorphic to $S^n_{(2k-1)}$ and is homotopy
equivalent to the bouquet of $2k-2$ copies of the
$(n-1)$-sphere.

\begin{proof}[Proof of Corollary~\textup{\ref{cor:hdim}}]
Set $\ell = (k-1)/2$, which is a nonnegative even integer.
There exists an $\ell \times \ell$ $(-1)^n$-symmetric integer matrix $A$
with determinant $\pm 1$. By Haefliger \cite{H3},
there exists an embedding of $\cup_{i=0}^k S^{n-1}_i$ into
$S^{2n-1}$ such that each component is embedded trivially,
that $S^{n-1}_i$ links with $S_0$ once for all $i > 0$,
and that the linking matrix for $\cup_{i=1}^\ell S^{n-1}_i$
coincides with $A$, where each $S^{n-1}_i$ is a copy
of $S^{n-1}$ and the linking number $\mathrm{lk}(S^{n-1}_i,
S^{n-1}_i) = 0$, $i = 1, 2, \ldots, \ell$, by
convention. Then by the above construction,
we get the homotopy sphere $X^{2n-1}$ in which the disjoint
union of $\ell + 1$ copies of the $(n-1)$-spheres is embedded.
Then, the connected sum $X^{2n-1} \sharp (-X^{2n-1})$ is
diffeomorphic to $S^{2n-1}$ by \cite{KM}, since $n \geq 3$.
Therefore, by the connected sum construction,
we get an NS-pair $(S^{2n-1}, L_k)$, where $L_k$
is diffeomorphic to the disjoint union of $2\ell+1 = k$
copies of $S^{n-1}$.

Then, applying Looijenga's construction,
we get the desired conclusion.
\end{proof}



\section{Bouquet theorems for real isolated singularities}

It is known that for a holomorphic function germ 
$f: (\mathbb{C}^{n+1}, 0) \to (\mathbb{C}, 0)$
with an isolated singularity at the origin, the Milnor fiber 
$F_{f}$ has the homotopy type of a bouquet (or a wedge) of $n$-dimensional spheres. 
For real polynomial map germs with an isolated singularity, 
we cannot expect, in general, such a bouquet theorem, which
can be seen as follows.

By Zeeman's twist spinning construction \cite{Z},
one can construct an NS-pair $(S^4, K^2)$ such that
the fundamental group of the fiber
is not a free group. Then Looijenga's construction
leads to a non-trivial polynomial map germ
$(\R^5, 0) \to (\R^2, 0)$ with an isolated singularity
at the origin such that the Milnor fiber does not
have a free fundamental group. Consequently, the Milnor
fiber is not homotopy equivalent to a bouquet of spheres.
%

\begin{remark}\label{spinning}
In the following,
in order to get examples in higher dimensions,
we use the spinning construction due to Artin
\cite{Artin}. For completeness, let us recall the
construction.
Let $(S^m, K^k)$ be an NS-pair
with $K^k \neq \emptyset$ and $\pi : S^m 
\setminus K^k \to S^{m-k-1}$ the associated
fibration. We denote the fiber of the fibration
$S^m \setminus \Int N(K^k) \to S^{m-k-1}$ by $F^{k+1}$,
where $N(K^k)$ is the closed tubular neighborhood of $K^k$
in $S^m$.
We take a point $q \in K^k$ and a small
$m$-disk neighborhood $D$ in $S^m$ such that
$(D, D \cap K^k)$ is diffeomorphic to the
standard disk pair $(D^m, D^k)$ and that
$\pi$ restricted to $D \setminus (D \cap K^k)$
is equivalent to the standard fibration $D^m \setminus
D^k \to S^{m-k-1}$.
Then, we consider the quotient space
of $(S^m \setminus \Int D, K^k \setminus (\Int D \cap K^k))
\times S^1$, where for each $x \in \partial D$,
the points of the form $(x, t)$ are identified
to a point for all $t$. This kind of a construction
is called the \emph{spinning}.
The resulting pair gives $(S^{m+1}, \tilde{K}^{k+1})$,
where $\tilde{K}^{k+1}$ is a smoothly embedded submanifold
of $S^{m+1}$ of dimension $k+1$. By construction,
there exists a fibration $\tilde{\pi} : S^{m+1}
\setminus \tilde{K}^{k+1} \to S^{m-k-1}$
which restricts to $\pi$ on $(S^m \setminus (\Int D \cup K^k)) 
\times \{t\}$
for each $t \in S^1$. It is straightforward to see
that $(S^{m+1}, \tilde{K}^{k+1})$ is an NS-pair.
We call it the \emph{spun}
of the NS-pair $(S^m, K^k)$.
Note that the fiber $\tilde{F}^{k+2}$
of the fibration $S^{m+1} \setminus
\Int N(\tilde{K}^{k+1}) \to S^{m-k-1}$
is diffeomorphic to the $(k+2)$-dimensional
manifold obtained from $F^{k+1} \times S^1$
by identifying, for each $x \in \Delta^k$, the points
of the form $(x, t)$ to a point for all $t$, where
$\Delta^k$ is a $k$-dimensional
disk embedded in $\partial F^{k+1}$ (near $q$).
Note that the fundamental groups of $S^m \setminus K^k$
and $S^{m+1} \setminus \tilde{K}^{k+1}$ are isomorphic,
and that $F^{k+1}$ and $\tilde{F}^{k+2}$ also
have isomorphic fundamental groups. 
\end{remark}


Let $(S^4, K^2)$ be an NS-pair such that the fiber
has non-free fundamental group.
Then, applying once the spinning construction explained 
above to $(S^4, K^2)$, one gets a non-trivial example in dimension 
$(6,2)$ such that the Milnor fiber is not homotopy equivalent to
a bouquet of spheres. 
Performing such procedures inductively one can construct 
examples in all pairs of dimensions $(n, 2)$, $n \geq 5$,
such that the Milnor fiber is not homotopy
equivalent to a bouquet of spheres. 

In this section we give sufficient conditions to 
guarantee that the real Milnor fiber is 
homotopy equivalent to a bouquet of spheres of 
the same dimension, or of different dimensions.

Throughout this section we consider $f: (\mathbb{R}^{n}, 0) 
\to (\mathbb{R}^{p}, 0)$, $n > p \geq 2$, 
a polynomial map germ with an isolated singularity 
at the origin and the Milnor fibration (the ``Milnor tube"),
$$f: f^{-1} (S_{\delta}^{p-1}) \cap D_{\varepsilon}^{n}
\to S_{\delta}^{p-1},$$
where $0 < \delta \ll \varepsilon \ll 1$. 
We denote by $F_{f}$ its fiber and by 
$\beta_{j}=\rk H_{j}(F_{f})$ its $j$-th Betti number.

Consider $\pi :(\mathbb{R}^{p},0) \to (\mathbb{R}^{p-1},0)$, 
$p \geq 3$, the germ of the canonical projection. Clearly,
the composition map germ $G=\pi \circ f :(\mathbb{R}^{n},0) 
\to (\mathbb{R}^{p-1},0)$ also has an isolated singularity 
at the origin and thus we have two fibrations:
$$f: f^{-1} (S_{\delta}^{p-1}) \cap D_{\varepsilon}^{n}
\to S_{\delta}^{p-1},$$ 
and
$$G: G^{-1} (S_{\delta}^{p-2}) \cap D_{\varepsilon}^{n}
\to S_{\delta}^{p-2}.$$

In \cite{DA} it was shown the relationship between the 
Milnor fibers $F_{f}$ and $F_{G}$.
%
It is worth pointing out that the results in 
\cite{DA} hold in a more general setting which includes 
the case of non-isolated singularities. 
Nevertheless, in the special case of an isolated singularity, 
it provides a positive answer to a conjecture stated by 
Milnor in \cite[p.~100]{Mi} as follows:

\begin{theorem}[\textup{\cite{DA}}] \label{proj}
Let $f:(\mathbb{R}^{n},0)\to (\mathbb{R}^{p},0)$, $n > p \geq 2$, 
be a polynomial map germ with an isolated singularity at the
origin and set $G=\pi \circ f :(\mathbb{R}^{n},0) \to 
(\mathbb{R}^{p-1},0)$. Then, the Milnor fiber $F_{G}$ of $G$
is homeomorphic to $F_{f}\times [0,1]$, where for $p=2$ the Milnor fiber 
of $G$ is, by definition, the intersection of a sufficiently
small closed ball centered at the origin and the
inverse image of a regular value 
sufficiently close to the origin. In particular, 
the Milnor fibers $F_f$ and $F_G$ have the same homotopy type.
\end{theorem}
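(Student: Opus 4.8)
The plan is to use that $\pi$ is the linear projection forgetting the last coordinate, so that for a regular value $w$ of $G$ the preimage $\ell_w := \pi^{-1}(w)$ is an affine line in $\mathbb{R}^p$ parallel to the last coordinate axis. Hence
\[
F_G = G^{-1}(w) \cap D_\varepsilon^n = f^{-1}(\ell_w) \cap D_\varepsilon^n
\]
is exactly the union of the slices $f^{-1}(w,s) \cap D_\varepsilon^n$ as $s$ ranges over those reals with $(w,s)$ in the image of $f|_{D_\varepsilon^n}$. Since the origin is an isolated singularity and $w \neq 0$, the line $\ell_w$ avoids $0$ and consists of regular values of $f$ near the origin; thus $f^{-1}(\ell_w)$ is a smooth submanifold of dimension $n-p+1$, and for $s$ in the relevant range each slice is a regular fibre which, by the triviality of Milnor's tube (Theorem~\ref{TeoMilnor}), is diffeomorphic to $F_f$. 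The dimension count $\dim F_G = \dim F_f + 1$ is already consistent with the desired product $F_f \times [0,1]$.

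First I would organize $F_G$ by the fibration function $h = f_p|_{F_G} : F_G \to \mathbb{R}$, the last coordinate of $f$, whose level set $h^{-1}(s)$ is precisely the slice $f^{-1}(w,s) \cap D_\varepsilon^n \cong F_f$. On the part of $F_G$ lying over a closed ``core'' subinterval $[-\eta, \eta]$ of values for which every $(w,s)$ is a regular value of $f$ and the fibre $f^{-1}(w,s)$ meets the sphere $S_\varepsilon^{n-1}$ transversally, the map $h$ is a proper submersion of a manifold with boundary, transverse to the lateral boundary $f^{-1}(\ell_w) \cap S_\varepsilon^{n-1}$. Ehresmann's fibration theorem for manifolds with boundary then trivialises it: lifting $\partial/\partial s$ to a vector field tangent to the lateral boundary and integrating its flow yields $h^{-1}([-\eta,\eta]) \cong F_f \times [-\eta,\eta] \cong F_f \times [0,1]$.

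The hard part, and the step I expect to be the main obstacle, is the behaviour at the two ends of the full parameter range, where $(w,s)$ reaches the boundary of the image $f(D_\varepsilon^n)$ and the slice $f^{-1}(w,s)$ becomes tangent to $S_\varepsilon^{n-1}$, so that the end slices degenerate (they drop dimension rather than remaining copies of $F_f$); the naive Ehresmann argument therefore fails precisely at the endpoints, and this is why the conclusion can only be a homeomorphism and not a diffeomorphism. To finish, I would show that each of the two end ``caps'' $h^{-1}([\eta, b])$ and $h^{-1}([a,-\eta])$ is a collar of the core fibre $h^{-1}(\pm\eta)$, i.e.\ that it is absorbable into the central product. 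This requires a Morse-theoretic control of the tangency: one checks, using Milnor's a priori transversality estimates for an isolated singularity (the conical structure underlying Theorem~\ref{TeoMilnor}, which is exactly what forces $0 < \delta \ll \varepsilon \ll 1$), that the restriction of $\|x\|^2$ to $f^{-1}(\ell_w)$ behaves like a nondegenerate function near each tangency, so that the cap collapses in a standard, product-compatible way. Identifying $F_G$ with the core product $F_f \times [0,1]$ (after the necessary corner-rounding) then completes the proof. The final ``In particular'' clause is immediate, since $[0,1]$ is contractible and hence the projection $F_f \times [0,1] \to F_f$ is a homotopy equivalence.
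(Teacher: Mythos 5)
A preliminary remark: the paper contains no proof of Theorem~\ref{proj} at all --- it is imported verbatim from the reference \cite{DA} (``In \cite{DA} it was shown the relationship between the Milnor fibers $F_f$ and $F_G$''), so your attempt can only be judged on its own merits and against that external source, not against an in-paper argument. On its own merits, the first half of your proposal is sound: the identification $F_G = f^{-1}(\ell_w)\cap D^n_\varepsilon$, the choice of $h=f_p|_{F_G}$, and the Ehresmann-with-boundary trivialization of the core $h^{-1}([-\eta,\eta])$ are all correct, because on that range one has $\|f(x)\|\leq \delta \ll \varepsilon$, where Milnor's transversality ($f^{-1}(y)\pitchfork S^{n-1}_\varepsilon$ for $\|y\|\leq\delta$) and the absence of critical points of $f$ away from the origin do apply; you should only add the (easy) remark that all regular fibers over the punctured $\delta$-disk are mutually diffeomorphic to $F_f$, since the tube fibration is locally trivial over a connected base.

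The genuine gap is in the caps, and you have in effect restated the difficulty rather than resolved it. Three concrete problems. First, Milnor's transversality estimates are established only for values with $\|y\|\leq\delta\ll\varepsilon$, while the caps $h^{-1}([\eta,b])$ and $h^{-1}([a,-\eta])$ consist precisely of the points where $\|f(x)\|>\delta$; so the ``a priori transversality estimates'' and the conical structure behind Theorem~\ref{TeoMilnor} that you invoke are unavailable exactly in the region where you need them. Second, in that uncontrolled region the tangencies of the slices $f^{-1}(w,s)$ with $S^{n-1}_\varepsilon$ need not be unique, need not sit only at the extreme parameter values, and need not be nondegenerate: $f$ and $S^{n-1}_\varepsilon$ are fixed, and choosing $\varepsilon$ and $w$ generically gives at best finiteness (by semialgebraicity), not the Morse behaviour of $\|x\|^2$ on $f^{-1}(\ell_w)$ that you assert without proof; the slices in a cap may change homeomorphism type several times before the end. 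Third, and most seriously, even granting a single nondegenerate boundary tangency, the local Morse model does not imply that the cap is a collar: a boundary critical point of $h$ generically attaches or removes a handle from the slice, so the claim that ``the cap collapses in a standard, product-compatible way'' is exactly the nontrivial content of the theorem, not a standard fact one ``checks.'' This is where the proof in \cite{DA} must (and does) invest its effort, controlling the part of $F_G$ outside the Milnor tube; until the caps are shown to be homeomorphic to $F_f\times[0,1]$ relative to the core slice, your argument proves only $h^{-1}([-\eta,\eta])\cong F_f\times[0,1]$, which is strictly weaker than the statement. (Your parenthetical explanation of why the conclusion is a homeomorphism rather than a diffeomorphism is likewise speculation, not something your construction establishes.)
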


In \cite[Chapter~11]{Mi}, Milnor provided information concerning 
the topology of the fiber $F_{f}$. It was proved in 
Lemma~11.4 that if $n < 2(p-1)$, then the Milnor fiber 
is necessarily contractible. It also follows from the 
first paragraph of the proof that for $n > p \geq 2$ in general, 
if the link is not empty, then the fiber $F_{f}$ is 
$(p-2)$-connected, i.e., $\pi_{i}(F_{f})=0$, $i=0,1, \ldots, p-2$.

In \cite{ADD} the authors proved formulae relating 
the Euler characteristic of the Milnor fiber and the 
topological degree of the gradient mapping of the coordinate 
functions, which extends Milnor's formula for
complex function germs with an isolated singularity 
(see \cite[p.~64]{Mi}) and Khimshiashvili's formula \cite{Khim} 
for isolated singularity real analytic function germs, as follows.

\begin{theorem}[\cite{ADD}]\label{euler}
Let $f:(\mathbb{R}^{n},0) \to (\mathbb{R}^{p},0)$, $n > p \geq 2$,
be a polynomial map germ with an isolated singularity at the origin, 
and consider 
$$f(x)=(f_{1}(x), f_2(x), \ldots, f_{p}(x)),$$
an arbitrary representative of the germ. Denote by 
$\mathrm{deg}_{0}(\nabla f_{i}(x))$, for $i=1, 2, \ldots, p$, 
the topological degree of the map 
$\displaystyle{\varepsilon\frac{\nabla f_{i}}{\|
\nabla f_{i} \|}:S^{n-1}_{\varepsilon} \to S^{n-1}_{\varepsilon}}$, 
for $\varepsilon > 0$ small enough.
\begin{itemize}
\item[(i)] If $n$ is even, then 
$\chi (F_{f}) = 1-\mathrm{deg}_0 \nabla f_1$. 
Moreover, we have
$$\mathrm{deg}_0 \nabla f_{1}= \mathrm{deg}_0 \nabla f_{2}
= \cdots= \mathrm{deg}_0 \nabla f_{p}.$$
\item[(ii)] If $n$ is odd, then 
$\chi (F_{f}) = 1$. Moreover, we have
$\mathrm{deg}_0 \nabla f_{i}=0$ for $i=1, 2, \ldots, p$.
\end{itemize}
\end{theorem}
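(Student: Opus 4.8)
The plan is to strip off the target coordinates of $f$ one at a time using Theorem~\ref{proj}, reducing the computation of $\chi(F_f)$ to the classical Khimshiashvili formula for the single function $f_1$, and then to use the connectedness of the base $S^{p-1}$ (which requires $p \geq 2$) to pin down the dependence on the sign of the chosen regular value. As a preliminary I would observe that each coordinate $f_i$ is itself an isolated-singularity function germ: if $\nabla f_i(x) = 0$ then the $i$-th row of $df_x$ vanishes, so $\mathrm{rank}\,df_x < p$ and $x \in \Sigma(f)$; since $\Sigma(f) = \{0\}$ near the origin, $\nabla f_i$ is non-vanishing on small punctured balls. Hence each $\deg_0 \nabla f_i$ is well defined, and the same rank computation shows that every truncation $(f_1, \ldots, f_j)$ also has an isolated singularity at the origin.

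Next I would iterate Theorem~\ref{proj}. Fix a small $\delta > 0$; because $\Sigma(f) = \{0\}$, a generic value near $+\delta e_1 \in \mathbb{R}^p$ is a regular value of $f$ whose successive truncations are regular values of $(f_1, \ldots, f_j)$, and applying Theorem~\ref{proj} down the chain yields $F_f \simeq f_1^{-1}(\delta) \cap D_\varepsilon^n =: F_+$. Running the same chain through a generic value near $-\delta e_1$ gives $F_f \simeq f_1^{-1}(-\delta) \cap D_\varepsilon^n =: F_-$. Equivalently, $F_+$ and $F_-$ are the fibers of the Milnor fibration of $f$ over the points $\pm e_1 \in S^{p-1}$, which are diffeomorphic since $S^{p-1}$ is connected for $p \geq 2$. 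In either reading, $\chi(F_f) = \chi(F_+) = \chi(F_-)$.

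Finally I would invoke Khimshiashvili's formula \cite{Khim} for the function $f_1$, in the form $\deg_0 \nabla f_1 = 1 - \chi(F_-)$, together with its companion obtained by replacing $f_1$ by $-f_1$: since $\nabla(-f_1) = -\nabla f_1$ and the antipodal map of $S^{n-1}$ has degree $(-1)^n$, one gets $(-1)^n \deg_0 \nabla f_1 = 1 - \chi(F_+)$. When $n$ is even the two identities coincide, giving $\chi(F_f) = 1 - \deg_0 \nabla f_1$; repeating the reduction after permuting the target so as to retain $f_i$ shows $\chi(F_f) = 1 - \deg_0 \nabla f_i$ for every $i$, whence all the degrees agree, which is part~(i). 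When $n$ is odd the identities read $\chi(F_+) = 1 + \deg_0 \nabla f_1$ and $\chi(F_-) = 1 - \deg_0 \nabla f_1$; the equality $\chi(F_+) = \chi(F_-)$ from the previous step forces $\deg_0 \nabla f_1 = 0$ and hence $\chi(F_f) = 1$, and the coordinate-symmetric version gives $\deg_0 \nabla f_i = 0$ for all $i$, which is part~(ii).

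I expect the main obstacle to be the bookkeeping in the iteration step: one must check that a single regular value pointing in the $\pm e_1$ direction can be threaded consistently through all the projections, so that the truncated values remain regular and the homotopy equivalences of Theorem~\ref{proj} compose to the advertised identification of $F_f$ with $F_+$ and with $F_-$. The conceptual heart, however, is the hypothesis $p \geq 2$: it is precisely the connectedness of $S^{p-1}$ that ties $F_+$ to $F_-$, producing the even/odd dichotomy and, in the odd case, the vanishing of the gradient degrees.
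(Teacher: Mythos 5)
The paper itself gives no proof of Theorem~\ref{euler}: it is imported verbatim from \cite{ADD}, so there is no internal argument to compare against. Judged on its own, your derivation is correct, and it uses precisely the two external ingredients the paper also quotes: Khimshiashvili's formula \cite{Khim} and the projection theorem (Theorem~\ref{proj}, from \cite{DA}). Your preliminary rank observations are right (if $\nabla f_i(x)=0$ the $i$-th row of $df_x$ dies, and dependence among the first $j$ rows forces $\rk df_x \leq (j-1)+(p-j)<p$, so coordinate functions and all truncations inherit the isolated singularity); the sign bookkeeping $\chi(F_-)=1-\mathrm{deg}_0\nabla f_1$ and $\chi(F_+)=1-(-1)^n\mathrm{deg}_0\nabla f_1$, obtained from $\nabla(-f_1)=-\nabla f_1$ and the degree $(-1)^n$ of the antipodal map, is the correct convention (test on $f_1=\sum x_i^2$); and permuting the target coordinates is an orthogonal change of target, so it preserves the Milnor tube and its fiber, which legitimately upgrades the $f_1$-statement to all $f_i$. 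The even/odd dichotomy and the vanishing of the degrees for $n$ odd then follow exactly as you say.

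Two local corrections. First, the sentence ``equivalently, $F_+$ and $F_-$ are the fibers of the Milnor fibration of $f$ over $\pm e_1$'' is false as stated: the fibers of $f$'s fibration are $(n-p)$-dimensional, while $F_\pm=f_1^{-1}(\pm\delta)\cap D^n_\varepsilon$ are hypersurfaces, i.e.\ fibers of the fibration of $f_1$, not of $f$. Delete it; your first reading already suffices, because Theorem~\ref{proj} at the last step ($p=2$) identifies $F_{(f_1,f_2)}\times[0,1]$ with the preimage of \emph{any} regular value of $f_1$ sufficiently close to $0$, of either sign, giving $F_f\simeq F_+$ and $F_f\simeq F_-$ directly. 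Second, the ``threading'' you worry about is vacuous: since $\Sigma(f)=\{0\}$ and $f(0)=0$, every sufficiently small nonzero value of each truncation is automatically regular on the punctured ball (no genericity needed), and Theorem~\ref{proj} is applied to abstract Milnor fibers at each stage, all mutually diffeomorphic while the target dimension is $\geq 2$, so no compatible chain of values must be chosen. The hypothesis $p\geq 2$ enters through the applicability of Theorem~\ref{proj} and through this sign-independence at the scalar stage; the connectedness of the base that you rightly identify as the conceptual heart is in fact what powers the proof of Theorem~\ref{proj} in \cite{DA}, rather than a separate monodromy argument of your own.
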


In particular, from item (ii) above it follows that 
if the source space is odd-dimensional, then the 
fiber can never be homotopy equivalent to
a bouquet of a positive number
of spheres of the same dimension.

In the following subsections, we consider the dimension
pairs $(2n, n)$ and $(2n+1, n)$, and study
conditions for a Milnor fiber to have the homotopy type
of a bouquet of spheres. We also
study the dimension pairs $(2n, p)$ and $(2n+1, p)$
with $2 \leq p \leq n$ using the composition with a
projection.

\subsection{The case of $(\mathbb{R}^{2n}, 0) \to (\mathbb{R}^n, 0)$}
\label{even}

Consider $f: (\mathbb{R}^{2n}, 0) \to (\mathbb{R}^n, 0)$,
$n\geq 2$, a polynomial map germ with an isolated
singularity at the origin. Note that $S^{2n-1}$
does not smoothly fiber over $S^{n-1}$. Hence, in this case
$F_{f}$ is an $n$-dimensional compact orientable 
manifold with non-empty boundary and $\pi_{i}(F_{f})=0$ 
for $i=0,1, \ldots, n-2$.
Since $\partial F_{f} \neq \emptyset$, we have
$H_n(F_{f}) = 0$.
Moreover, since $F_{f}$ is
orientable, the homology $H_{n-1}(F_{f})$ is torsion free. 
Then, by Theorem~\ref{euler}, item (i),
we have
$\beta_{n-1}=(-1)^{n}\mathrm{deg}_{0}(\nabla f_{1})$.

Furthermore, in the special case $n=2$, the fibers are compact
connected surfaces with non-empty boundary, so that
they have the homotopy type of a bouquet of $1$-dimensional 
spheres (circles). Furthermore, for $n=3$, we have
seen in Section~\ref{section2}
that the fibers are diffeomorphic to
$S^3_{(k+1)}$ for some non-negative integer $k$,
and hence they are homotopy equivalent to
a bouquet of $2$-spheres.
Therefore, we
may assume that $n \geq 4$.
Note that if $\mathrm{deg}_{0}(\nabla f_{1}) = 0$,
then the Milnor fiber is contractible.

It follows from the Hurewicz theorem that the 
Hurewicz homomorphism 
$$\rho_{n-1} : \pi_{n-1} (F_{f}) \to  H_{n-1} (F_{f}) 
\cong \mathbb{Z}^{\beta_{n-1}}$$
is an isomorphism. Then, for each generator 
$\gamma_i \in H_{n-1}(F_{f}) \cong \mathbb{Z}^{\beta_{n-1}}$ 
there exists a continuous map $\varphi_i : S^{n-1} \to F_f$,
$i = 1, 2, \ldots, \beta_{n-1}$, such that 
$\gamma_i = \rho_{n-1} ([\varphi_i])= (\varphi_i)_* ([S^{n-1}])$, 
where $[S^{n-1}] \in H_{n-1} (S^{n-1})\cong \mathbb{Z}$ is 
the fundamental class (given by
the natural orientation of $S^{n-1}$). 
Therefore, we have the continuous map
$$\varphi:\displaystyle{\bigvee^{\beta_{n-1}} S^{n-1} \to F_{f}}$$
obtained by the wedge of the maps $\varphi_i: S^{n-1} \to F_{f}$, for
$i= 1, 2, \ldots, \beta_{n-1}$, which is a homotopy 
equivalence by the Whitehead theorem.


Thus we have proved the following:

\begin{proposition} \label{wedge-even} 
Let $f : (\R^{2n}, 0) \to (\R^n, 0)$ be a polynomial
map germ with an isolated singularity at the origin, $n \geq 2$.
Given $f(x)=(f_{1}(x), f_2(x), \ldots, f_{n}(x))$, a 
representative of the germ $f$, we have the following.
\begin{itemize}
\item[(i)] $\beta_{n-1} = (-1)^n \mathrm{deg}_{0}(\nabla f_{1})$.
\item[(ii)] The Milnor fiber $F_{f}$ has the homotopy type of 
a bouquet of $(n-1)$-dimensional spheres 
$$\displaystyle {\bigvee^{\beta_{n-1}} S^{n-1}},$$ 
where it means a point when $\beta_{n-1}=0$.
\end{itemize}
\end{proposition}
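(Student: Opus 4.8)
The plan is to read off both statements from the extension of the Milnor--Khimshiashvili formula (Theorem~\ref{euler}) once the homology of $F_f$ has been completely pinned down. Since the source dimension $2n$ is even and the link is nonempty (because $S^{2n-1}$ does not fiber over $S^{n-1}$), I would first invoke Theorem~\ref{euler}(i) to obtain $\chi(F_f) = 1 - \mathrm{deg}_0(\nabla f_1)$. Next I would determine the homology of $F_f$: it is a compact orientable $n$-dimensional manifold with nonempty boundary that is $(n-2)$-connected. The nonempty boundary gives $H_n(F_f) = 0$, the $(n-2)$-connectedness gives $\tilde{H}_i(F_f) = 0$ for $i \leq n-2$, and orientability (via Poincar\'e--Lefschetz duality together with universal coefficients) forces $H_{n-1}(F_f)$ to be free abelian. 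Thus the reduced homology of $F_f$ is concentrated in degree $n-1$, where it equals $\Z^{\beta_{n-1}}$.

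With this structure in hand, item (i) is immediate: the Euler characteristic collapses to $\chi(F_f) = 1 + (-1)^{n-1}\beta_{n-1}$, and comparing with the formula above yields $(-1)^{n-1}\beta_{n-1} = -\mathrm{deg}_0(\nabla f_1)$, that is, $\beta_{n-1} = (-1)^n \mathrm{deg}_0(\nabla f_1)$.

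For item (ii) I would dispose of the low-dimensional cases separately, since the Hurewicz argument needs $n-1 \geq 2$: for $n = 2$ the fiber is a compact surface with nonempty boundary and hence collapses to a wedge of circles, while for $n = 3$ we have already seen that $F_f \cong S^3_{(k+1)}$, a wedge of $2$-spheres. For $n \geq 4$, since $F_f$ is $(n-2)$-connected, the Hurewicz theorem makes $\rho_{n-1} : \pi_{n-1}(F_f) \to H_{n-1}(F_f) \cong \Z^{\beta_{n-1}}$ an isomorphism. I would then pick maps $\varphi_i : S^{n-1} \to F_f$ realizing a basis of $H_{n-1}(F_f)$, form their wedge $\varphi : \bigvee^{\beta_{n-1}} S^{n-1} \to F_f$, and check that $\varphi_*$ is an isomorphism on every homology group (both sides have reduced homology only in degree $n-1$, where $\varphi_*$ carries the chosen generators to the chosen generators). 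As both spaces are simply connected, the Whitehead theorem then upgrades $\varphi$ to a homotopy equivalence.

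The argument is essentially forced once the homology is known, so the only genuinely delicate point is the torsion-freeness of $H_{n-1}(F_f)$: I would want to make sure the duality and universal-coefficient bookkeeping is airtight, since it is exactly this freeness that licenses both the clean Euler-characteristic computation in (i) and the identification of $F_f$ with a wedge of $(n-1)$-spheres (rather than a more complicated Moore-type space) in (ii).
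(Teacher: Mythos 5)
Your proposal is correct and follows essentially the same route as the paper: nonempty link (since $S^{2n-1}$ does not fiber over $S^{n-1}$), hence an $(n-2)$-connected compact orientable $n$-manifold fiber with nonempty boundary, torsion-free $H_{n-1}(F_f)$, Theorem~\ref{euler}(i) for the Betti number, the special cases $n=2,3$ handled separately, and Hurewicz plus Whitehead for $n \geq 4$. The only difference is that you spell out the Poincar\'e--Lefschetz/universal-coefficients justification for torsion-freeness and the homology check for the wedge map, both of which the paper leaves implicit; these details are sound.
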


%
For $n \geq 4$, it follows from Theorem~\ref{T2}, item (c), 
that in all pairs of dimensions $(2n,n)$ there exist non-trivial 
examples. 
However, these non-trivial examples due to
Church--Lamotke \cite{CL} have contractible Milnor fibers
(with non-simply connected links). 
On the other hand, according to our
construction in Section~\ref{section4} together with
Theorem~\ref{proj},
we get the following.

\begin{corollary}\label{bouquet}
For each pair of dimensions $(2n, p)$, $2 \leq p \leq n$, 
there exists a real isolated singularity polynomial map germ 
$(\mathbb{R}^{2n}, 0)\to (\mathbb{R}^p, 0)$ such that the 
Milnor fiber is, up to homotopy, a bouquet of $(n-1)$-dimensional
spheres with the number of
spheres equal to $|\mathrm{deg}_{0}(\nabla f_{1})| > 0$,
where $f(x)=(f_{1}(x), f_2(x), \ldots, f_p(x))$.
\end{corollary}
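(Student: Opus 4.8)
The plan is to reduce the whole statement to the top dimension pair $(2n,n)$, where the Milnor fiber is completely described by Proposition~\ref{wedge-even}, and then to pass to smaller $p$ by composing with a coordinate projection and quoting Theorem~\ref{proj}. First I would settle the case $p=n$. For $n\geq 3$, Corollary~\ref{cor:hdim} provides, for every $k\equiv 1\pmod{4}$, a polynomial map germ $f:(\R^{2n},0)\to(\R^n,0)$ with an isolated singularity at the origin whose Milnor fiber is homotopy equivalent to a bouquet of $2k-2$ copies of $S^{n-1}$. Taking any $k\geq 5$ with $k\equiv 1\pmod{4}$ makes $2k-2>0$. By Proposition~\ref{wedge-even} this bouquet consists of exactly $\beta_{n-1}=(-1)^n\mathrm{deg}_0(\nabla f_1)$ spheres, whence $|\mathrm{deg}_0(\nabla f_1)|=2k-2>0$, which is precisely the count asserted. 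The remaining top pair is $(4,2)$ (the case $n=2$), which lies outside the range of the construction of Section~\ref{section4}; here I would instead take a non-trivial germ $(\R^4,0)\to(\R^2,0)$, whose existence is guaranteed by Theorem~\ref{T2}(a). Its Milnor fiber is a compact connected orientable surface with non-empty boundary which is not a disk, hence a bouquet of a positive number of circles, and Proposition~\ref{wedge-even} again identifies that number with $|\mathrm{deg}_0(\nabla f_1)|>0$.

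Next I would descend from $p=n$ to an arbitrary $p$ with $2\leq p<n$. Let $\pi:\R^n\to\R^p$ be the canonical projection onto the first $p$ coordinates and set $G=\pi\circ f=(f_1,\ldots,f_p):(\R^{2n},0)\to(\R^p,0)$, where $f$ is the germ produced above. Applying Theorem~\ref{proj} once for each of the $n-p$ elementary projections $\R^j\to\R^{j-1}$ shows that every intermediate germ still has an isolated singularity and that its Milnor fiber is homeomorphic to the previous one times $[0,1]$; in particular $F_G$ has the same homotopy type as $F_f$, namely a bouquet of $2k-2$ copies of $S^{n-1}$. Because the projection only discards the later component functions, the first component $f_1$, and hence its gradient degree $\mathrm{deg}_0(\nabla f_1)$ computed on $S^{2n-1}_\varepsilon$, is unaffected by the descent, so the number of spheres remains $|\mathrm{deg}_0(\nabla f_1)|=2k-2>0$.

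There is no deep obstacle here; the work is entirely in the bookkeeping. The two points that need attention are ensuring that the bouquet is non-empty --- which is why one must avoid $k=1$, the value giving the contractible fiber $S^n_{(1)}$ --- and verifying that it is the gradient degree of the \emph{first} component that governs the count and that this particular degree is left unchanged by passing to a projection. If convenient, the latter can be reinforced by Theorem~\ref{euler}(i), which gives $\mathrm{deg}_0(\nabla f_1)=\cdots=\mathrm{deg}_0(\nabla f_p)$ for any number of components over an even-dimensional source. The only genuinely separate argument is the edge case $n=2$, handled above, since Corollary~\ref{cor:hdim} requires $n\geq 3$.
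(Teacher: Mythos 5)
Your proposal is correct and follows essentially the same route as the paper, which proves Corollary~\ref{bouquet} by combining the Section~\ref{section4} construction (Corollary~\ref{cor:hdim}, with $k\equiv 1\pmod 4$ giving a bouquet of $2k-2$ copies of $S^{n-1}$), Proposition~\ref{wedge-even} to identify the sphere count with $|\mathrm{deg}_0(\nabla f_1)|$, and iterated application of Theorem~\ref{proj} to descend from $p=n$ to $2\leq p<n$. Your explicit treatment of the edge case $(4,2)$ via Theorem~\ref{T2}(a) --- where a non-trivial fiber is a connected surface with boundary that is not a disk, hence a bouquet of $\beta_1>0$ circles --- is a welcome addition, since the paper's one-sentence proof leaves $n=2$ outside the stated range $n\geq 3$ of its construction.
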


\subsection{The case of $(\mathbb{R}^{2n+1}, 0) \to (\mathbb{R}^n, 0)$}
\label{odd} 

Consider now $f: (\mathbb{R}^{2n+1}, 0) \to (\mathbb{R}^n, 0)$, 
$n\geq 3$, a polynomial map germ with an isolated
singularity at the origin. In this case, the Milnor fiber $F_{f}$ 
is an $(n+1)$-dimensional compact orientable manifold with 
non-empty boundary and is
$(n-2)$-connected. Then, $H_{n+1}(F_{f})=0$, 
$H_{n}(F_{f})$ is torsion free and, by Theorem~\ref{euler}, 
$\beta_{n}=\beta_{n-1}$.
Suppose that $H_{n-1}(F_{f})$ is torsion free. Then, we have $H_{n-1}
(F_f)\cong \mathbb{Z}^{\beta_{n-1}} \cong H_n (F_f)$. By Hurewicz 
theorem, the Hurewicz homomorphisms
$$\rho_{n-1} : \pi_{n-1} (F_{f}) \to H_{n-1} (F_{f}) \cong \mathbb{Z}^{\beta_{n-1}}$$
and
$$\rho_n : \pi_n (F_{f}) \to H_n (F_{f}) \cong \mathbb{Z}^{\beta_{n-1}}$$
are surjective. Then, by an argument similar to that
used in the case $(2n, n)$, we can construct a homotopy equivalence
$$\varphi : \left(\displaystyle{\bigvee^{\beta_{n-1}}S^{n-1}}\right) \vee 
\left(\displaystyle{\bigvee^{\beta_{n}}S^{n}}\right)
\to F_f.$$
Thus, we have proved the following result.

\begin{proposition}\label{wedge-odd}
Let $f:(\mathbb{R}^{2n+1},0)\to (\mathbb{R}^{n},0)$, $n\geq 3$, 
be a real isolated singularity polynomial map germ.
Then, the $(n-1)$-th homology $H_{n-1}(F_{f})$ of the
Milnor fiber is torsion free if and only if $F_f$
has the homotopy type of 
a bouquet of spheres of the form
$$\left(\displaystyle{\bigvee^{\beta_{n-1}}S^{n-1}}\right)
\vee
\left(\displaystyle{\bigvee^{\beta_{n-1}}S^{n}}\right)
= \bigvee^{\beta_{n-1}} (S^{n-1} \vee S^n),$$
where it means a point when $\beta_{n-1}=0$.
%
\end{proposition}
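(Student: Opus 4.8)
The plan is to prove the two implications separately; the forward implication is essentially the construction already indicated in the paragraph preceding the statement, while the converse is a short homology computation. For the converse ($\Leftarrow$), suppose $F_f$ is homotopy equivalent to $\bigvee^{\beta_{n-1}}(S^{n-1}\vee S^n)$. Since $n\geq 3$, the spheres $S^{n-1}$ and $S^n$ have distinct positive dimensions, so the reduced homology of this bouquet is free abelian and concentrated in degrees $n-1$ and $n$. By homotopy invariance of homology, $H_{n-1}(F_f)$ is then free abelian, hence torsion free (and its rank is forced to be $\beta_{n-1}$, consistent with the statement). The conceptual point here is that no wedge of spheres can carry torsion in its homology, which is exactly why ``$H_{n-1}(F_f)$ torsion free'' is the correct hypothesis.

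For the main implication ($\Rightarrow$), I would assume $H_{n-1}(F_f)$ torsion free and combine this with the facts recorded above: $F_f$ is an $(n-2)$-connected compact orientable $(n+1)$-manifold with nonempty boundary, $H_{n+1}(F_f)=0$, $H_n(F_f)$ is torsion free, and $\beta_n=\beta_{n-1}$ by Theorem~\ref{euler}(ii). Together these force the only nonvanishing reduced homology groups to be $H_{n-1}(F_f)\cong\mathbb{Z}^{\beta_{n-1}}\cong H_n(F_f)$. Because $F_f$ is $(n-2)$-connected with $n-2\geq 1$, the Hurewicz theorem gives that $\rho_{n-1}:\pi_{n-1}(F_f)\to H_{n-1}(F_f)$ is an isomorphism and that $\rho_n:\pi_n(F_f)\to H_n(F_f)$ is surjective. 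Choosing maps $\varphi_i:S^{n-1}\to F_f$ and $\psi_j:S^n\to F_f$ whose Hurewicz images constitute bases of the free abelian groups $H_{n-1}(F_f)$ and $H_n(F_f)$ respectively, I assemble the wedge map
$$\varphi:\left(\bigvee^{\beta_{n-1}}S^{n-1}\right)\vee\left(\bigvee^{\beta_{n-1}}S^n\right)\to F_f.$$
By construction $\varphi_*$ sends the standard generators of the source's homology to the chosen bases, so it is an isomorphism in degrees $n-1$ and $n$, and trivially in degree $0$; hence $\varphi_*$ is an isomorphism in every degree. Both spaces are simply connected CW complexes (the wedge because $n-1\geq 2$, and $F_f$ because it is $(n-2)$-connected), so the Whitehead theorem upgrades this homology isomorphism to a homotopy equivalence, as required.

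The step I expect to demand the most care is the realization of the $H_n$-generators by spheres. Since $\rho_n$ is only surjective and need not be injective, there is no canonical basis of $\pi_n(F_f)$; one relies solely on surjectivity---the second part of the Hurewicz theorem in the degree immediately above the connectivity---to lift each free generator of $H_n(F_f)$ to a spherical class represented by some $\psi_j$. The remaining verifications, namely that both the wedge and $F_f$ are simply connected CW complexes so that the homology form of Whitehead's theorem applies, hold precisely because $n\geq 3$, and are routine.
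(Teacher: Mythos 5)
Your proposal is correct and follows essentially the same route as the paper: the forward direction is exactly the paper's argument (the $(n-2)$-connectivity, $H_{n+1}(F_f)=0$, torsion-freeness of $H_n(F_f)$, $\beta_n=\beta_{n-1}$ from Theorem~\ref{euler}(ii), surjectivity of the Hurewicz maps $\rho_{n-1}$ and $\rho_n$, a wedge of spherical representatives of basis classes, and the homology Whitehead theorem), and your explicit treatment of the trivial converse merely spells out what the paper leaves implicit in the ``if and only if''. Your cautionary remark about $\rho_n$ being only surjective is exactly the point the paper also relies on, so nothing is missing.
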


According to our
construction in Section~\ref{section4} together with
Theorem~\ref{proj} again, we get the following.

\begin{corollary}\label{bouquet2}
For each pair of dimensions $(2n+1, p)$, $2 \leq p \leq n$, 
there exists a real isolated singularity polynomial map germ 
$(\mathbb{R}^{2n+1}, 0)\to (\mathbb{R}^p, 0)$ such that the 
Milnor fiber is, up to homotopy, a bouquet of $\ell$ copies
of the $n$-dimensional
sphere and $\ell$ copies of the $(n-1)$-dimensional sphere
with $\ell > 0$.
\end{corollary}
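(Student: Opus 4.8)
The plan is to first settle the base case $p = n$ and then descend to every $p$ with $2 \le p \le n$; throughout I assume $n \ge 3$, as in this subsection. For the descent, suppose I already have a polynomial map germ $g : (\R^{2n+1},0) \to (\R^n,0)$ with an isolated singularity whose Milnor fiber has the asserted homotopy type. By Theorem~\ref{proj}, the composition $G = \pi \circ g$ with the canonical projection $\pi : \R^n \to \R^{n-1}$ again has an isolated singularity at the origin and a Milnor fiber homeomorphic to $F_g \times [0,1]$, hence of the same homotopy type as $F_g$. Iterating this $n-p$ times produces a germ $(\R^{2n+1},0) \to (\R^p,0)$ with an isolated singularity whose Milnor fiber is homotopy equivalent to $F_g$, for every $p$ with $2 \le p \le n$. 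It therefore suffices to produce $g$.

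To build $g$, I start from an NS-pair $(S^{2n-1}, L)$ furnished by the construction of Section~\ref{section4}, chosen so that its fiber $F$ is diffeomorphic to $S^n_{(r)}$ with $r \ge 2$ (for instance the one of Corollary~\ref{cor:hdim}); then $F \simeq \bigvee^{r-1} S^{n-1}$ is a bouquet of a positive number of $(n-1)$-spheres. I then apply the spinning construction of Remark~\ref{spinning} once, obtaining an NS-pair $(S^{2n}, \tilde{L})$ with link of dimension $n$ and fiber $\tilde{F}$ of dimension $n+1$, fibered over $S^{n-1}$. Finally I apply Looijenga's construction (Remark~\ref{lobs}) to $(S^{2n}, \tilde{L})$ to obtain a polynomial map germ $g : (\R^{2n+1},0) \to (\R^n,0)$ with an isolated singularity whose Milnor fiber is the interior of a boundary connected sum $\tilde{F} \natural (\pm\tilde{F})$ of two copies of $\tilde{F}$.

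The core computation is the homology of the spun fiber $\tilde{F}$. By Remark~\ref{spinning}, $\tilde{F}$ is obtained from $F \times S^1$ by collapsing, for each $x$ in a boundary disk $\Delta \cong D^{n-1} \subset \partial F$, the circle $\{x\} \times S^1$ to a point. Since $\Delta$ is contractible, $\tilde{F} \simeq \tilde{F}/\Delta = (F \times S^1)/(\Delta \times S^1)$, so $\tilde{H}_*(\tilde{F}) \cong H_*(F \times S^1, \Delta \times S^1)$. Because $F \simeq \bigvee^{r-1} S^{n-1}$, the Künneth theorem shows $H_*(F \times S^1)$ is free and concentrated in degrees $0, 1, n-1, n$ with ranks $1, 1, r-1, r-1$, while $H_*(\Delta \times S^1) \cong H_*(S^1)$ and the inclusion induces isomorphisms in degrees $0$ and $1$. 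Feeding this into the long exact sequence of the pair, all terms in degrees $\neq n-1, n$ cancel and one is left with $\tilde{H}_{n-1}(\tilde{F}) \cong \Z^{r-1}$ and $\tilde{H}_n(\tilde{F}) \cong \Z^{r-1}$. Thus $\tilde{F}$ is simply connected (its fundamental group equals $\pi_1(F) = 1$ since $n-1 \ge 2$), has free homology, and satisfies $\beta_{n-1}(\tilde{F}) = \beta_n(\tilde{F}) = r-1 > 0$.

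Since a boundary connected sum of connected compact manifolds is homotopy equivalent to the wedge, the Milnor fiber of $g$ satisfies $F_g \simeq \tilde{F} \vee \tilde{F}$, so it again has free homology with $\beta_{n-1}(F_g) = \beta_n(F_g) = 2(r-1) =: \ell > 0$. In particular $H_{n-1}(F_g)$ is torsion free, so Proposition~\ref{wedge-odd} applies to $g$ and yields $F_g \simeq \bigvee^{\ell}(S^{n-1} \vee S^n)$, the asserted mixed bouquet of $\ell$ copies of $S^{n-1}$ and $\ell$ copies of $S^n$ with $\ell > 0$. Together with the projection step this proves the corollary. I expect the main obstacle to be the homotopy computation of the spun fiber $\tilde{F}$: one must identify it correctly with the quotient $(F \times S^1)/(\Delta \times S^1)$ and verify that spinning creates no torsion and promotes exactly one $n$-cycle for each $(n-1)$-cycle of $F$. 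Once this is secured, the Looijenga doubling and the projection descent are routine.
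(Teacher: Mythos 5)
For $n \ge 3$ your proof is correct and follows the route the paper intends: the paper's own proof is the single line ``for $n \ge 3$, this is a consequence of Proposition~\ref{wedge-odd}'', which tacitly relies on exactly the existence input you spell out --- an NS-pair from Section~\ref{section4} (take $k \ge 5$ in Corollary~\ref{cor:hdim}; note that $k=1$ is allowed there but yields the trivial fiber $B^n$ and hence $\ell=0$), one application of the spinning construction of Remark~\ref{spinning} to pass from $(S^{2n-1},L)$ to an NS-pair $(S^{2n},\tilde L)$ fibered over $S^{n-1}$, the Looijenga construction of Remark~\ref{lobs} to realize it by a polynomial germ $g:(\R^{2n+1},0)\to(\R^n,0)$ with an isolated singularity, and finally Theorem~\ref{proj} to descend from $p=n$ to all $2\le p\le n$. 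Your homology computation of the spun fiber --- identifying $\tilde F$ up to homotopy with $(F\times S^1)/(\Delta\times S^1)$, checking $\pi_1(\tilde F)\cong\pi_1(F)=1$, and extracting $\tilde H_{n-1}(\tilde F)\cong\tilde H_n(\tilde F)\cong\Z^{r-1}$, torsion free, from the exact sequence of the pair --- is sound, and together with $F_g\simeq\tilde F\vee\tilde F$ it verifies the hypothesis of Proposition~\ref{wedge-odd}; in fact you supply the details that the paper leaves implicit.

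The one genuine gap is the case $n=2$. The range $2\le p\le n$ in the statement includes $n=2$ (the pair $(5,2)$), and the paper proves this case separately, so your blanket assumption $n\ge 3$ omits a case the corollary asserts. Your argument cannot be pushed down to $n=2$ as written: Proposition~\ref{wedge-odd} requires $n\ge 3$, and more fundamentally, for $n=2$ the fiber is not simply connected (its bouquet contains circles, so its fundamental group is a non-trivial free group), which destroys the Hurewicz--Whitehead step. The paper's substitute is the same spin-then-Looijenga template you use for $n\ge 3$: start with a non-trivial fibered knot $(S^3,K)$, spin once to obtain the fibered $2$-knot $(S^4,\tilde K)$, observe that its fiber --- a spun positive-genus surface with boundary --- has the homotopy type of a bouquet of equal positive numbers of circles and $2$-spheres, and then apply the Looijenga construction to obtain the germ $(\R^5,0)\to(\R^2,0)$, the bouquet identification being made directly rather than via Proposition~\ref{wedge-odd}.
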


\begin{proof}
For $n \geq 3$, this is a consequence of
Proposition~\ref{wedge-odd}.
For $n = 2$, we start with a non-trivial
fibered knot $(S^3, K)$. Then, its spun $(S^4, \tilde{K})$
is a non-trivial fibered $2$-knot,
and its fiber is obtained by spinning a positive
genus surface with boundary. Therefore, the fiber
of $(S^4, \tilde{K})$ has the homotopy
type of a bouquet of a positive number of circles
and $2$-spheres. This completes the proof.
\end{proof}


\subsection{Application to $k$-stairs maps}
Given a polynomial map germ $f:(\mathbb{R}^n, 0)\to (\mathbb{R}^q, 0)$,
$n \geq q \geq 1$,
with an isolated singularity at the origin, we say that a map 
germ $F: (\mathbb{R}^{n}, 0) \to (\mathbb{R}^{p}, 0)$,
$1 \leq q \leq p$, is a 
\emph{$(p-q)$-stairs map for $f$} if there exist germs of 
polynomial functions $g_{j}: (\mathbb{R}^{n}, 0) \to (\mathbb{R}, 0)$, 
$q+1 \leq j  \leq p$, such that $F(x)=(f(x), g_{q+1}(x), g_{q+2}(x), 
\ldots, g_{p}(x))$ has an isolated singularity at the origin. 
If $p=q$, then by definition, we have $F(x)=f(x)$ and $f$ 
is its own $0$-stairs map.

\begin{corollary}
Let $f:(\mathbb{R}^{n},0)\to (\mathbb{R}^{p},0)$, 
$n/2 \geq p \geq 2$, 
be a polynomial map germ with an isolated singularity 
at the origin. Then we have the following.
\begin{itemize}
\item [(i)] If $n$ is even and $f$ admits a 
$(n/2-p)$-stairs map, then the Milnor fiber 
is homotopy equivalent to a bouquet of $(n/2-1)$-dimensional spheres.
\item [(ii)] Suppose $n$ is odd and $H_{k}(F_{f})$ is torsion free 
for $k=(n-1)/2-1$, where $F_f$ denotes the Milnor fiber. 
If $f$ admits a $((n-1)/2-p)$-stairs map, 
then the Milnor fiber is homotopy equivalent to a bouquet of 
$k$- and $(k+1)$-dimensional spheres, where the numbers of
spheres are the same.
\end{itemize} 
\end{corollary}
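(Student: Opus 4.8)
The plan is to reduce both statements to the ``middle-dimensional'' cases already settled in Propositions~\ref{wedge-even} and~\ref{wedge-odd}, using the stairs hypothesis to climb up to the middle dimension and then Theorem~\ref{proj} to descend back to $f$ without altering the homotopy type of the Milnor fiber. Writing $m = n/2$ in case~(i) and $m = (n-1)/2$ in case~(ii), so that $n = 2m$ or $n = 2m+1$, a $(m-p)$-stairs map for $f$ is exactly a polynomial map germ $F = (f_1, \ldots, f_p, g_{p+1}, \ldots, g_m) : (\R^n, 0) \to (\R^m, 0)$ with an isolated singularity at the origin, and by construction $f = \pi' \circ F$, where $\pi' : \R^m \to \R^p$ is the projection onto the first $p$ coordinates. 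Thus the stairs hypothesis is precisely what is needed to realize $f$ as a projection of a middle-dimensional germ.

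First I would relate the fibers $F_f$ and $F_F$. Set $F^{(j)} = (f_1, \ldots, f_p, g_{p+1}, \ldots, g_j) : (\R^n, 0) \to (\R^j, 0)$ for $p \le j \le m$, so that $F^{(m)} = F$, $F^{(p)} = f$, and $F^{(j-1)} = \pi \circ F^{(j)}$ for the canonical projection $\pi$ dropping the last coordinate. Since $F^{(m)}$ has an isolated singularity, each intermediate $F^{(j)}$ does too, because composing an isolated-singularity germ with the canonical projection again yields an isolated singularity (as recalled just before Theorem~\ref{proj}). Applying Theorem~\ref{proj} to the pair $(F^{(j)}, F^{(j-1)})$ for $j = m, m-1, \ldots, p+1$ — each application being legitimate since $n > m \ge j \ge 2$ — produces a chain of homotopy equivalences
$$F_f = F_{F^{(p)}} \simeq F_{F^{(p+1)}} \simeq \cdots \simeq F_{F^{(m)}} = F_F.$$
Consequently $F_f$ and $F_F$ have the same homotopy type, and in particular isomorphic homology in every degree. (When $p = m$, i.e.\ a $0$-stairs map, this chain is trivial and $F = f$.)

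Next I would invoke the middle-dimensional results. In case~(i), $n$ is even and $F : (\R^{2m}, 0) \to (\R^m, 0)$ has an isolated singularity, so Proposition~\ref{wedge-even}(ii) shows $F_F$ is homotopy equivalent to a bouquet of $(m-1) = (n/2-1)$-dimensional spheres; together with $F_f \simeq F_F$ this gives~(i). In case~(ii), $n$ is odd and $F : (\R^{2m+1}, 0) \to (\R^m, 0)$ has an isolated singularity. The hypothesis that $H_k(F_f)$ is torsion free for $k = m-1$ transports across the equivalence $F_f \simeq F_F$ to give that $H_{m-1}(F_F)$ is torsion free, which is exactly the condition demanded by Proposition~\ref{wedge-odd}. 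That proposition then yields that $F_F$ is homotopy equivalent to $\bigvee^{\beta_{m-1}}(S^{m-1} \vee S^m)$, a bouquet of equal numbers of $k$- and $(k+1)$-dimensional spheres, and $F_f \simeq F_F$ completes the argument.

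The step demanding the most care is the passage from $F_f$ to $F_F$: one must verify that every intermediate germ $F^{(j)}$ genuinely has an isolated singularity, so that Theorem~\ref{proj} applies at each stage, and that the inequalities $n > j \ge 2$ hold throughout, which they do since $j \le m = \lfloor n/2 \rfloor < n$. A second, minor obstacle is the boundary case $m = 2$ of~(ii) (that is, $n = 5$, $p = 2$), which lies outside the range $m \ge 3$ of Proposition~\ref{wedge-odd}; this low-dimensional instance must be treated directly, in the spirit of the explicit surface-fiber argument used for the base case in the proof of Corollary~\ref{bouquet2}.
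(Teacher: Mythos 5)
Your proposal is correct and is essentially the paper's own argument: the paper's entire proof of this corollary reads ``Just apply Propositions~\ref{wedge-even}, \ref{wedge-odd}, and Theorem~\ref{proj}'', which, unwound, is exactly your chain of intermediate germs $F^{(j)}$ (each with an isolated singularity since dropping a coordinate preserves this, as the paper notes before Theorem~\ref{proj}), with Theorem~\ref{proj} applied at each stage and then the two bouquet propositions. You are in fact more careful than the paper on one point: the boundary case $(n,p)=(5,2)$ in~(ii), where $(n-1)/2 = 2$ lies outside the range $n \geq 3$ of Proposition~\ref{wedge-odd}, is silently glossed over by the paper's one-line proof, so flagging it as needing separate treatment is a legitimate refinement rather than a deviation.
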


\begin{proof} 
Just apply Propositions~\ref{wedge-even}, \ref{wedge-odd}, and
Theorem~\ref{proj}.
\end{proof}

We do not know whether or not the bouquet structure in 
the fiber characterizes the existence of such $k$-stairs maps
for $k \geq 1$.

\section*{Acknowledgements}

The authors would like to express their sincere gratitude to
Edivaldo Lopes dos Santos and Denise de Mattos
for stimulating discussions and invaluable comments.

The first author would like to thank very much the Fapesp grant 2013/23443-5 
and CNPq/PQ-2 309819/2012-1. 
The second author would like to thank very much all Fapesp support given during 
the development of the PhD project, grant number 2012/12972-4. 
The third author has been supported in part by JSPS KAKENHI Grant Number
23244008, 23654028. 
The fourth author would like to thank CNPq/PDJ grant 502638/2012-5 during 
the post-doc in ICMC/USP.


\end{document}